\definecolor{old}{rgb}{0, 0.2, 0.4}
\definecolor{div1}{RGB}{0,66,157}
\definecolor{div2}{RGB}{57,100,173}
\definecolor{div3}{RGB}{85,137,189}
\definecolor{div4}{RGB}{109,175,204}
\definecolor{div5}{RGB}{130,214,219}
\definecolor{div6}{RGB}{255,202,185}
\definecolor{div7}{RGB}{253,146,145}
\definecolor{div8}{RGB}{231,93,111}
\definecolor{div9}{RGB}{197,42,82}
\definecolor{div10}{RGB}{147,0,58}
\definecolor{div11}{RGB}{0,157,58}
\definecolor{pal1}{RGB}{113,31,134} 
\definecolor{pal2}{RGB}{146,150,238}
\definecolor{pal3}{RGB}{60,63,133}
\definecolor{pal4}{RGB}{83,198,239}
\definecolor{pal5}{RGB}{17,103,126}
\definecolor{pal21}{RGB}{105,176,95} 
\definecolor{pal22}{RGB}{43,131,186}
\definecolor{matlab1}{RGB}{0,113.9850,188.9550}
\definecolor{matlab2}{RGB}{216.7500,82.8750,24.9900}
\definecolor{matlab3}{RGB}{236.8950,176.9700,31.8750}
\definecolor{matlab4}{RGB}{125.9700,46.9200,141.7800}
\definecolor{matlab5}{RGB}{118.8300,171.8700,47.9400}
\definecolor{matlab6}{RGB}{76.7550,189.9750,237.9150}
\definecolor{matlab7}{RGB}{161.9250,19.8900,46.9200}
\definecolor{matlab8}{RGB}{255.0000,68.8500,57.8850}
\definecolor{matlab9}{RGB}{100.9800,129.7950,252.9600}
\definecolor{matlab10}{RGB}{255.0000,213.9450,9.9450}
\definecolor{matlab11}{RGB}{0,162.9450,162.9450}
\definecolor{matlab12}{RGB}{202.9800,131.8350,92.8200}
\definecolor{seq11}{RGB}{253,190,133}
\definecolor{seq12}{RGB}{253,141,60}
\definecolor{seq21}{RGB}{158,154,200}
\definecolor{seq22}{RGB}{106,81,163}
\definecolor{mygray}{RGB}{113,113,113}
\pgfplotsset{compat = newest}
\newcommand\RedeclareMathOperator{%
  \@ifstar{\def\rmo@s{m}\rmo@redeclare}{\def\rmo@s{o}\rmo@redeclare}%
}
\newcommand\rmo@redeclare[2]{%
  \begingroup \escapechar\m@ne\xdef\@gtempa{{\string#1}}\endgroup
  \expandafter\@ifundefined\@gtempa
     {\@latex@error{\noexpand#1undefined}\@ehc}%
     \relax
  \expandafter\rmo@declmathop\rmo@s{#1}{#2}}
\newcommand\rmo@declmathop[3]{%
  \DeclareRobustCommand{#2}{\qopname\newmcodes@#1{#3}}%
}
\newcommand{\RedeclarePairedDelimiter}[3]{
	\let\angles\relax\DeclarePairedDelimiter{#1}{#2}{#3}}
\newcommand {\Onot}	{\mathcal{O}}   
\newcommand {\R}	{\mathbb{R}}
\newcommand {\N}	{\mathbb{N}}
\newcommand {\cP}   {\mathcal{P}}
\newcommand {\cB}   {\mathcal{B}}
\newcommand {\cF}   {\mathcal{F}}
\DeclarePairedDelimiter  {\norm} {\lVert} {\rVert}
\DeclarePairedDelimiter  {\abs}  {\lvert} {\rvert}
\DeclarePairedDelimiter  {\inner}{\langle}{\rangle}
\RedeclarePairedDelimiter{\angles}  {\langle}{\rangle}
\DeclareMathOperator* {\argmax}{arg\,max}
\DeclareMathOperator  {\spann} {span}
\RedeclareMathOperator{\closure}{clos}
\RedeclareMathOperator{\div}   {div}
\DeclareMathOperator  {\dist}{dist}
\newcommand\bomega{\boldsymbol{\omega}}
\newcommand\restr[2]{{
  \left.\kern-\nulldelimiterspace 
  #1 
  \vphantom{\big|} 
  \right|_{#2} 
  }}
\renewcommand*\env@matrix[1][*\c@MaxMatrixCols c]{%
  \hskip -\arraycolsep
  \let\@ifnextchar\new@ifnextchar
  \array{#1}}
\newsavebox\myboxA
\newsavebox\myboxB
\newlength\mylenA
\newcommand*\xoverline[2][0.75]{%
    \sbox{\myboxA}{$\m@th#2$}%
    \setbox\myboxB\null
    \ht\myboxB=\ht\myboxA%
    \dp\myboxB=\dp\myboxA%
    \wd\myboxB=#1\wd\myboxA
    \sbox\myboxB{$\m@th\overline{\copy\myboxB}$}
    \setlength\mylenA{\the\wd\myboxA}
    \addtolength\mylenA{-\the\wd\myboxB}%
    \ifdim\wd\myboxB<\wd\myboxA%
       \rlap{\hskip 0.5\mylenA\usebox\myboxB}{\usebox\myboxA}%
    \else
        \hskip -0.5\mylenA\rlap{\usebox\myboxA}{\hskip 0.5\mylenA\usebox\myboxB}%
    \fi}
\definecolor{bh}{RGB}{0, 106, 255} 
\definecolor{bh2}{RGB}{0, 128, 0} 
\newcommand	{\hue}	   [1]{{\color{bh}{#1}}} 
\newtheorem{theorem}{Theorem}[section]
\newtheorem{corollary}[theorem]{Corollary}
\newtheorem{lemma}[theorem]{Lemma}
\theoremstyle{definition}
\newtheorem{example}[theorem]{Example}
\theoremstyle{remark}
\newtheorem{remark}[theorem]{Remark}
\numberwithin{equation}{section}
\pgfplotsset{
  log x ticks with fixed point/.style={
      xticklabel={
        \pgfkeys{/pgf/fpu=true}
        \pgfmathparse{exp(\tick)}%
        \pgfmathprintnumber[fixed relative, precision=3]{\pgfmathresult}
        \pgfkeys{/pgf/fpu=false}
      }
  },
  log y ticks with fixed point/.style={
      yticklabel={
        \pgfkeys{/pgf/fpu=true}
        \pgfmathparse{exp(\tick)}%
        \pgfmathprintnumber[fixed relative, precision=3]{\pgfmathresult}
        \pgfkeys{/pgf/fpu=false}
      }
  }
}
\begin{document}

\author[Reich]{Niklas Reich}
\address{Ulm University, Institute for Numerical Mathematics, Helmholtzstr.\ 20, 89081 Ulm, Germany}
\curraddr{Hochschule Ruhr West, Institut Naturwissenschaften, Duisburger Str.\ 100, 45479 M\"ulheim a.d.\ Ruhr, Germany}
\email{niklas.reich@uni-ulm.de}

\author[Urban]{Karsten Urban}
\address{Ulm University, Institute for Numerical Mathematics, Helmholtzstr.\ 20, 89081 Ulm, Germany}
\email{karsten.urban@uni-ulm.de}

\author[Vorloeper]{Jürgen Vorloeper}
\address{Hochschule Ruhr West, Institut Naturwissenschaften, Duisburger Str.\ 100, 45479 M\"ul\-heim a.d.\ Ruhr, Germany}
\email{juergen.vorloeper@hs-ruhrwest.de}

\title[A parallel batch greedy algorithm]{A parallel batch greedy algorithm in reduced basis methods: Convergence rates and numerical results}%
\thanks{The authors have no competing interests to declare that are relevant to the content of this article. We are very grateful to Stephan Rave and Mario Ohlberger (University of Münster) for the stimulating discussions and valuable input. We thank the anonymous reviewers for very substantial input. KU gratefully acknowledges support within the project \enquote{DigiPr\"uF} funded by the German Federal Ministry for Economic Affairs and Climate Action based on a resolution of the German Bundestag as part of the \enquote{Future Investments in Vehicle Manufacturers and Supplier Industry} program.}

\subjclass[2020]{
65N15, 
65N30, 
65Y05
}%
\date{\today}

\begin{abstract}
The \enquote{classical} (weak) greedy algorithm is widely used within model order reduction in order to compute a reduced basis in the offline training phase: An a posteriori error estimator is maximized and the snapshot corresponding to the maximizer is added to the basis. Since these snapshots are determined by a sufficiently detailed discretization, the offline phase is often computationally extremely costly.

We suggest to replace the serial determination of one snapshot after the other by a parallel approach. In order to do so, we introduce a batch size $b$ and compute $b$ snapshots in parallel. These functions are added to the current reduced basis either if they satisfy a bulk criterion or after a Proper Orthogonal Decomposition (POD).

We prove convergence rates for this new batch greedy algorithm and compare them to those of the classical (weak) greedy algorithm in the Hilbert and Banach space case. Then, we present numerical results where we apply a (parallel) implementation of the proposed algorithm to the linear elliptic \emph{thermal block} problem. We analyze the convergence rate as well as the offline and online wall-clock times for different batch sizes. We show that the proposed variant can significantly speed-up the offline phase while the size of the reduced problem is only moderately increased. The benefit of the parallel batch greedy increases for more complicated problems.
\end{abstract}

\maketitle

\section{Introduction}\label{Sec:1}
Model order reduction of parameterized partial differential equations (PPDEs) by the reduced basis method (RBM) has been a very active  research field over the last at least 15 years, see \cite{Haasdonk:RB,hesthaven2016certified,quarteroni2015reduced,Urban:RB} for surveys. The reduced model is determined in an offline training phase by selecting certain samples of the parameter and using sufficiently detailed numerical solutions to compute approximations of the solution of the PPDE (called \emph{snapshots}) for those samples. These snapshots then form the reduced basis. Since the snapshots need to be sufficiently detailed, their determination might be computationally costly. Parallel computing can of course be used for each snapshot.

However, since the sample values of the parameter are determined by maximizing an a posteriori error over a training set of parameters in a (weak) greedy algorithm, one snapshot is selected in each greedy iteration, which is a serial process. In order to fully benefit from a potential gain of computational power in the offline phase, we suggest in this paper to chose $b\ge 2$ samples at once and to compute all such $b$ snapshots in parallel. We call this a \emph{parallel batch greedy algorithm}. These $b$ snapshots are either chosen by the $b$ largest values of an error estimator or as the largest and $b-1$ random samples. An alternative approach has been presented in \cite{MultiFidelity}, where the high-fidelity error estimator in the greedy algorithm is replaced by multi-fidelity error estimation.

The convergence properties of the (standard) greedy algorithm are very well un\-der\-stood, \cite{weakgreedy,weakgreedy2}. In fact, it has been proven in these papers that the convergence rate of the weak greedy method is optimal as compared to the rate of the best possible linear approximation, which is given by the Kolmogorov $n$-width. We analyze the effect of the batch variant both theoretically and numerically. It turns out that this is not completely straightforward as adding all elements of a batch may add too little information to yield a sufficient decay of the error. This is due to the fact that the functions in the batch might be (almost) linearly dependent. To overcome this issue, we consider two variants: (i) snapshots from the batch are only added to the reduced basis if they satisfy a \emph{bulk criterion} concerning the error or (ii) a Proper Orthogonal Decomposition (POD) is performed to the projection error of the batch and  only those POD modes are added who significantly contribute to reducing the error. In both cases we need to adapt  the theoretical results and the proofs in \cite{weakgreedy,weakgreedy2} to the batch case.
As expected, the convergence rate of the weak greedy (slightly) suffers for adding more snapshots from a batch, which means that the size of the reduced model increases (the offline speedup comes at the cost of additional online cost). Since this effect is based upon the sizes of involved constants, we performed numerical experiments comparing the batch greedy with the standard, \enquote{classical} greedy method. These experiments are done for a model problem as this allows us to fully compare the batch with the classical weak greedy (CWG) method. However, we clearly see that the batch version has significant potential to speed up the offline phase in particular for challenging problems. For those cases, however, required hyper-reduction would interfere with the desired comparison of classical and batch greedy method.

This paper is organized as follows. In Section \ref{Sec:Greedy}, we introduce the classical greedy method in strong and weak form. We also introduce the batch version of the weak greedy scheme. The convergence analysis is described in Section \ref{Sec:Analysis} and the results of our numerical experiments are presented in Section \ref{Sec:NumExp}.  The error analysis in Banach spaces as a straightforward generalization of \cite[\S 4]{weakgreedy2} is presented in Appendix \ref{Sec:Appendix_Banach}.

\section{Greedy algorithms}
\label{Sec:Greedy}
We start by recalling known facts on greedy algorithms from \cite{weakgreedy,weakgreedy2} and introduce the idea of a batch greedy algorithm. We restrict ourselves to the case where $X$ is a Hilbert space with a norm induced by an inner product, i.e. $\norm{\cdot} \equiv \norm{\cdot}_X \equiv\sqrt{\inner{\cdot, \cdot}}$, $\inner{\cdot, \cdot}\equiv \inner{\cdot, \cdot}_X$; the Banach space case is described in Appendix \ref{Sec:Appendix_Banach}. Let $\cF\subseteq X$ be compact and (for notational convenience only and without loss of generality) we shall assume that the elements $f$ of $\cF$ satisfy $\norm{f}_X \leq 1$, i.e., $\cF\subseteq\cB_1(0)$, the unit ball in $X$.

\subsection{The classical strong greedy algorithm}\label{Sec:csg}

Before formulating the algorithm, we introduce some notation. Let $V_n\subset X$ be some subspace of dimension $n\in\N$. Then,
\begin{align}\label{eq:sigmaf}
	\sigma_n (f):= \dist(f,V_n)  := \inf_{g_n\in V_n} \norm{f - g_n},
	\qquad
	\sigma_n \equiv\ \sigma_n (\cF):= \max_{f\in\cF}\dist(f,V_n).
\end{align}
This means in particular that $\sigma_0 (f) := \norm{f}$ and therefore $\sigma_0 (\cF):= \max_{f\in\cF} \norm{f}\le 1$, since $\dim V_0 = 0$.
The strong greedy method is shown in Algorithm \ref{alg:sg}.
\begin{algorithm}
\caption{Strong Greedy Algorithm}\label{alg:sg}
\begin{algorithmic}[1]
	\Require $V_0:=\emptyset$.
	\For{$n=0,1,2,...$} 
		\State Choose $f_n \in \cF$ such that $f_n=\argmax_{f \in \cF} \sigma_n (f)$.\label{line:eq:strong}
		\State $V_{n+1} := \spann (V_n\oplus \{ f_{n} \})$.
	\EndFor
\end{algorithmic}
\end{algorithm}

Obviously, line \ref{line:eq:strong} means that $f_n$ minimizes the distance to $V_n$, which explains the name \emph{strong} greedy.

\subsection{The Kolmogorov \emph{n}-width}\label{Sec:KolN}
We are interested in the decay of $\sigma_n$ as $n\to\infty$, i.e., the question how well $\cF$ is approximated by $V_n$ in an $L_\infty(\cF)$-sense. Since $V_n$ is a linear space, the best we can achieve is expressed by the \emph{Kolmogorov $n$-width} $d_n(\cF)$ of $\cF$ defined for $n\in\N$ by
\begin{equation*}
    d_n \equiv d_n(\cF) 
    := \inf_{\substack{X_n\subset X \\ \dim(X_n)=n}}
    \sup_{f \in \cF} \dist(f,X_n),
\end{equation*}
and we set $d_0 \equiv d_0(\cF) := \max_{f \in \cF} \norm{f} =: \sigma_0(\cF) \le 1$. If the infimum in the definition of $d_n$ is attained, the corresponding argument $X_n$ is called \emph{optimal in the sense of Kolmogorov}. We aim that the spaces $V_n$ generated by the greedy method are possibly close to such optimal space $X_n$. Hence, we would hope that $\sigma_n = \Onot(d_n)$, see \cite{weakgreedy,weakgreedy2} and references therein for corresponding results.

\subsection{The classical weak greedy algorithm}\label{Sec:cwg}
Realizing line \ref{line:eq:strong} in algorithm \ref{alg:sg}, namely determining the distance w.r.t.\ $\cF$ is either impossible or at least computationally too costly. In fact, in a reduced basis framework, the greedy method selects sample parameters $\mu^{(n)}$ and the elements are then the numerical detailed solution $f_n=u(\mu^{(n)})$ (also called \emph{snapshots}) for the chosen parameter. In the strong variant of the greedy algorithm we would need to compute the snapshot of every parameter $\mu$ in a training set. This is not feasible in practice and therefore the strong greedy method is mainly of theoretical interest and not appropriate for numerical computations. To circumvent this problem, we introduce a weak variant of the greedy algorithm, described in Algorithm \ref{alg:wg}.
\begin{algorithm}
\caption{Weak Greedy Algorithm}\label{alg:wg}
\begin{algorithmic}[1]
	\Require Fix a constant $0 < \gamma \leq 1$, $V_0:=\emptyset$.
	\For{$n=0,1,2,3,...$} 
		\State Choose $f_n \in \cF$ such that $\sigma_n(f_n) \geq \gamma\, \sigma_n(\cF)$ \label{line:wg_select}
		\State $V_{n+1} := \spann (V_n\oplus \{ f_{n} \})$.
	\EndFor
\end{algorithmic}
\end{algorithm}

In the weak form, one is able to make the choice of $f_n$ without computing it beforehand. This is usually realized by an error surrogate $r_n(f)$ satisfying 
\begin{align}\label{eq:surrogate}
	c_r\, r_n(f) \leq \sigma_n(f) \leq C_r\, r_n(f) 
	\quad\text{ for all } f \in \cF, 
\end{align}
with some fixed constants $0<c_r \le C_r <\infty$. Line \ref{line:wg_select} of Algorithm \ref{alg:wg} is then realized by setting $\gamma = \frac{c_r}{C_r}$ and choosing $f_n$ such that $f_n = \argmax_{f \in \cF} r_n(f)$, \cite{weakgreedy}. Only the chosen snapshot $f_n=u(\mu^{(n)})$ is then computed. Note that for $\gamma =1$, the weak greedy algorithm coincides with the strong greedy algorithm. Moreover, neither the greedy algorithm nor the weak greedy algorithm gives a unique sequence $(f_n)_{n \geq 0}$; also the sequence $(\sigma_n(\cF))_{n \geq 0}$ is not unique. In what follows, the notation reflects any sequence which can arise in the implementation of the weak greedy selection for the fixed value of $\gamma$.

\subsection{A weak batch greedy algorithm with bulk criterion}\label{Sec:wbg}
Our aim is to in\-cor\-por\-ate (more) parallel processing into the (weak) greedy algorithm. Of course, as mentioned before, parallel computing can be used for each single snapshot com\-pu\-ta\-tion, but the outer greedy algorithm will always remain serial. To overcome this, if in one iteration $\ell=0,1,2,...$ of a greedy algorithm $b\ge 2$ sample parameters $\mu^{(\ell,k)}$, $k=0,...,b-1$, are selected, the corresponding snapshots can be computed in parallel, which potentially significantly improves the computational efficiency. As already pointed put earlier, adding all $b$ snapshots to the reduced basis may add only too little information as those functions might be (almost) linearly dependent (see Example \ref{ex:worstcase}). To overcome this issue, after the parallel computation of the snapshots, they are only added step by step to the reduced space if a \enquote{quality} (or \enquote{bulk}) criterion is satisfied. To formulate this criterion, we introduce a \emph{bulk parameter} $0 \leq \lambda \leq 1$ and add a snapshot of the batch to the reduced space only if its error surrogate is larger than a bulk of the current error. This describes the idea of a \emph{batch algorithm} with \emph{batch size} $b$ and \emph{bulk parameter} $\lambda$, which is shown in detail in Algorithm \ref{alg:wbg}.

\begin{algorithm}
\caption{Weak Batch Greedy Algorithm with bulk criterion}\label{alg:wbg}
\begin{algorithmic}[1]
	\Require Batch size $b$, fixed constant $0 < \gamma \leq 1$, bulk parameter $0 \leq \lambda \leq 1$.
	\State Set $n=0$ and $V_n = \emptyset$.
	\For{$\ell=0,1,2,3,...$} 
		\State $\cB_\ell = \emptyset$; $b_\ell:=0$
		\For{$k=0, \ldots, b-1$} \Comment{choose batch} \label{line:begin_firstloop}
			\State \label{line:batchchoice} $\widetilde{f}_{{\ell,k}} := \argmax\limits_{f \in \cF \setminus \cB_\ell} r_{n}(f)$
			\State $\cB_\ell = \cB_\ell \cup \{ \widetilde{f}_{{\ell,k}}\}$
		\EndFor \label{line:end_firstloop}
		\State Compute the snapshots of $\cB_\ell$ in parallel. \Comment{compute batch}
		\For{$k=0, \ldots, b-1$}  \label{line:begin_secondloop}
			\State \label{line:batchcandidate} $\bar{f}_{{\ell,k}} := \argmax\limits_{f \in \cB_\ell} r_{n}(f)$
				\Comment{maximize batch w.r.t.\ current dimension $n$}
			\If{$r_n(\bar{f}_{\ell,k}) \geq \lambda \max\limits_{f \in \cF} r_n(f)$} \Comment{bulk criterion}\label{line:bulk}
				\State $n \to n+1$, $b_\ell\to b_\ell+1$ \Comment{Caution: $n$ is increased}\label{line:n}
				\State $V_{n} = \spann (V_{n-1} \oplus \{ \bar{f}_{\ell,k}\})$, possibly orthogonalize $V_{n}$
			\Else
				\State break \Comment{abandon the rest of the batch}
			\EndIf
		\EndFor \label{line:end_secondloop}
	\EndFor 
\end{algorithmic}
\end{algorithm}

Let us comment on some details of Algorithm \ref{alg:wbg}.

\begin{remark}
\begin{compactenum}[(a)]
	\item With the introduction of the batch, the iteration number $\ell$ of the greedy algorithm and the dimension $n$ of the reduced space do no longer coincide. In fact, the dimension $n$ is increased in line \ref{line:n} only if the bulk criterion is satisfied. In each iteration, $b_\ell$ snapshots are added, so that the dimension at the end of the second loop in line \ref{line:end_secondloop} reads $n(\ell):=b_0+b_1+\cdots+b_\ell$. For the special case $\lambda=0$, we add $b_\ell\equiv b$ snapshots in each iteration and thus get the dimension $n(\ell)=b\ell$ (i.e., $\ell = \lfloor n/b \rfloor$); for $\lambda>0$, we only know that $n(\ell) \geq \ell$.
	\item In the first loop (lines \ref{line:begin_firstloop}-\ref{line:end_firstloop}), the $b$ snapshots $\{ \widetilde{f}_{\ell,0}, \ldots, \widetilde{f}_{\ell,b-1}\}$ forming the batch $\cB_\ell$ are selected (but not yet computed) using the error surrogate $r_n$ w.r.t.\ the current dimension $n$ of the reduced space. Once $\cB_\ell$ is determined, all snapshots are computed in parallel, but not yet added to the basis.
	\item In the second loop (lines \ref{line:begin_secondloop}-\ref{line:end_secondloop}), the reduced space is updated. To this end, we first determine the batch element maximizing the error surrogate \emph{for the current dimension $n$}. Hence, $\widetilde{f}_{\ell,0} = \bar{f}_{\ell,0}$ for the first element. Moreover, this first element always satisfies the bulk criterion, i.e., the first element in the batch is always added to the reduced basis. Then, once an element is added to the reduced basis, $n$ is increased (line \ref{line:n}) so that the next maximization in line \ref{line:batchcandidate} is performed with a larger $n$ (and thus different $V_n$) as in the loop before. Hence, in general $\widetilde{f}_{\ell,k} \neq \bar{f}_{\ell,k}$ for $k\geq 1$.\\
	Moreover, increasing the dimension $n$ and updating the reduced space requires some numerical effort since the reduced model needs to be recomputed (Riesz representations, projectors), which will be seen in our numerical experiments below.
	\item If $f \in V_n$, then $\sigma_n(f)=r_n(f)=0$. Therefore, an element of the batch cannot be selected twice in the loop over the batch in line \ref{line:batchcandidate}.
	\item For $b=1$ and any $\lambda\in [0,1]$, the algorithm coincides with the classical greedy scheme.
	\hfill$\diamond$
\end{compactenum}
\end{remark}

We collect some properties of Algorithm \ref{alg:wbg} that will be relevant in the analysis.

\begin{lemma} \label{col:algprop}
	Assume \eqref{eq:surrogate}, $\gamma := \frac{c_r}{C_r}$ and let $\{ f_n\}_{n\in\N_0}$ be generated by Algorithm \ref{alg:wbg}. Then:
    \begin{enumerate}[(i)]
	\item \label{prop:firstofbatch} 
		For each iteration $\ell=0,1,2,...$, let $\bar{f}_{\ell,0}=\widetilde{f}_{\ell,0}$ be the first element of the batch $\cB_\ell$. This element is $f_{i(\ell)}$ for the index
		\begin{align*}
			i(\ell) := \begin{cases}
					0, & \text{for } \ell=0, \\
					b_0+\cdots+b_{\ell-1}, & \text{for } \ell\ge 1,
				\end{cases}
		\end{align*}
		and we have $\sigma_{i(\ell)}(f_{i(\ell)}) \geq \gamma\,\sigma_{i(\ell)}$ for all $\ell\in\N_0$.
    	\item \label{prop:batchquality}For very $n\in\N_0$ we have $\sigma_n(f_n) \geq \lambda\, \gamma\, \sigma_n$.
\end{enumerate}
\end{lemma}
\begin{proof} 
	In each iteration we add $b_\ell$ elements to the sequence $\{ f_n\}_{n\in\N_0}$, which explains that $i(\ell)$ is in fact the index of the first element of the batch $\cB_\ell$. Next, set for $n\in\N_0$
    \begin{equation*}
	f_n^r 	:= \argmax_{f \in \cF} r_n(f)\in\cF 
		\quad \text{and} \quad 
	f_n^\sigma := \argmax_{f \in \cF} \sigma_n(f)\in\cF.
    \end{equation*}
    As $f_{i(\ell)}$ is the first element of the batch $\cB_\ell$, we get $f_{i(\ell)} = f_{i(\ell)}^r$ by line \ref{line:batchchoice} of Algorithm \ref{alg:wbg}. Then, \eqref{eq:surrogate} yields
	\begin{align*}
	\sigma_{i(\ell)}(f_{i(\ell)}) 
		&\stackrel{\eqref{eq:surrogate}}{\geq} c_r\,  r_{i(\ell)}(f_{i(\ell)}) 
		= c_r\, r_{i(\ell)}(f_{i(\ell)}^r) 
		\geq c_r\, r_{i(\ell)}(f_{i(\ell)}^\sigma) \\
		&\stackrel{\eqref{eq:surrogate}}{\geq}  \frac{c_r}{C_r}\, \sigma_{i(\ell)}(f_{i(\ell)}^\sigma) 
		= \gamma\, \sigma_{i(\ell)}(\cF),
	\end{align*}
	where the the second inequality follows by the fact that $f^r_n$ is the maximizer of $r_n$ and the last equality is due to the definition of $f^\sigma_n$ as the maximizer of $\sigma_n$. This proves (\ref{prop:firstofbatch}).

	In order to prove (\ref{prop:batchquality}), we use the bulk criterion in line \ref{line:bulk} to deduce
	\begin{align*}
		\sigma_n(f_n) 
		&\stackrel{\eqref{eq:surrogate}}{\geq} c_r\, r_n(f_n) 
		\geq c_r\, \lambda\, \max_{f\in\cF} r_n(f) 
		\geq c_r\, \lambda\, r_n(f_n^\sigma) \\
		&\stackrel{\eqref{eq:surrogate}}{\geq} \frac{c_r}{C_r}\, \lambda\, \sigma_n(f_n^\sigma)
		= \lambda\, \gamma\, \sigma_n(f_n^\sigma) 
		= \lambda\, \gamma\, \sigma_n(\cF),
	\end{align*}
	which completes the proof.
\end{proof}

\subsection{A weak batch greedy algorithm with POD}\label{Sec: POD}
The bulk criterion realizes a guaranteed error decay when adding a new snapshot to the reduced basis. This guarantee comes at some cost: (i) adding new snapshots is a fully serial approach, since one snapshot is added at a time; (ii) adding a snapshot increases the dimension, which causes the fact that the reduced model has to be recomputed (i.e., projectors, Riesz representations, affine decomposition, etc.). Hence, an alternative is almost straightforward, namely to apply a Proper Orthogonal Decomposition (POD) on the projection errors of all elements of the batch and to add only the resulting POD modes whose singular values exceed a certain relative threshold $\lambda$ (we use the same notation as for the bulk parameter in order to compare the variants with bulk and POD later). The resulting scheme is shown in Algorithm \ref{alg:wbgPOD} below.

\begin{algorithm}
\caption{Weak Batch Greedy Algorithm with POD}\label{alg:wbgPOD}
\begin{algorithmic}[1]
	\Statex Replace lines \ref{line:begin_secondloop}-\ref{line:end_secondloop} in Algorithm \ref{alg:wbg} by
    	\setcounter{ALG@line}{8}
        \State $\cB_{\ell}^{\text{err}} := \left\lbrace \widetilde{f}_{{\ell,k}} - P_n \widetilde{f}_{{\ell,k}}\right\rbrace_{k=0,...,b-1}$ \Comment{compute projection error} \label{line:proj_err}
        \State $\left\lbrace \left\lbrace \bar{f}_{\ell,k}\right\rbrace_{k=0,...,b_{\ell}-1}, \left\lbrace \theta_{\ell,k}\right\rbrace_{k=0,...,b_{\ell}-1} \right\rbrace
            = \text{POD} \left( \cB_{\ell}^{\text{err}}; \mathtt{rtol} = \lambda \right)$ \Comment{compress batch} \label{line:pod}
		\State $V_{n+b_\ell} = \spann (V_{n} \oplus \left\lbrace  \theta_{\ell,k} \bar{f}_{\ell,k}\right\rbrace_{k=0,...,b_{\ell}-1})$, possibly orthogonalize $V_{n+b_\ell}$\label{Line:PODupdate}
        \State $n \to n+b_\ell$
\end{algorithmic}
\end{algorithm}

We collect some facts of Algorithm \ref{alg:wbgPOD} that will be relevant in the analysis.

\begin{remark}\label{rem:pod_alg}
\begin{compactenum}[(a)]
\item Line \ref{line:pod} in Algorithm \ref{alg:wbgPOD} gives the maximum number $b_{\ell} \in \{ 1, \ldots, b\}$ of singular values (and left singular vectors) such that for all $k=1, \ldots, b_{\ell} - 1$ it holds
    \begin{equation} \label{eq:lambda_sv}
        \theta_{\ell,k} \ge \lambda\, \theta_{\ell,0}.
    \end{equation}
\item The update in line \ref{Line:PODupdate} could of course be done by $\bar{f}_{\ell,k}$ instead of $\theta_{\ell,k} \bar{f}_{\ell,k}$. This is done for convenience in the analysis (see Lemma \ref{col:algpropPOD} and \S \ref{Sec:AnalysisPOD} below).
\item In some aspects the algorithm is similar to the \emph{POD-Greedy Method} \cite{haasdonk2013convergence,haasdonk2008reduced}. Some parts of the subsequent proofs are adaptions of ideas from \cite{haasdonk2013convergence}.
\hfill$\diamond$
\end{compactenum}
\end{remark}

\begin{lemma} \label{col:algpropPOD}
	Assume \eqref{eq:surrogate}, $\gamma := \frac{c_r}{C_r}$ and let $\{ f_n\}_{n\in\N_0}$ be generated by Algorithm \ref{alg:wbgPOD}. Then:
    \begin{enumerate}[(i)]
	\item \label{prop:firstofbatchPOD} 
	$\sigma_{i(\ell)}(f_{i(\ell)}) \geq \frac{\gamma}{\sqrt{b}}\,\sigma_{i(\ell)}$ for all $\ell\in\N_0$ with $i(\ell)$ being the index of the first element that is added to the current subspace given in Lemma \ref{col:algprop} (\ref{prop:firstofbatch}).
    	\item \label{prop:batchqualityPOD}For very $n\in\N_0$ we have $\sigma_n(f_n) \geq \frac{\lambda \, \gamma}{\sqrt{b}}\, \sigma_n$.
\end{enumerate}
\end{lemma}
\begin{proof}
  Let $X^b$ be the $b$-times product of $X$ with inner product $\inner{u,v}_b := \sum_{k=0}^{b-1} \inner{u_k, v_k}$ for $u:=(u_0, \ldots, u_{b-1}), v:=(v_0, \ldots, v_{b-1}) \in X^b$ and induced norm  $\norm{\cdot}_b := \sqrt{\inner{\cdot, \cdot}_b}$. 
Performing a full POD of $u \in X^b$, it is well-known that for the singular values $\{ \theta_k(u)\}_{k=0,...,b-1}$ it holds
\begin{equation}\label{eq:allPODsv}
	\sum_{k=0}^{b-1} \theta_k(u)^2 = \norm{u}_b^2.
\end{equation}
Hence, using the descending order of $\theta_{\ell,k}$, \eqref{eq:allPODsv} as well as line \ref{line:proj_err} of Algorithm \ref{alg:wbgPOD} and line \ref{line:batchchoice} (of Algorithm \ref{alg:wbg}) we get
\begin{align*}
	b \, \theta_{\ell,0}(\cB_{\ell}^{\text{err}})^2 
	&\ge \sum_{k=0}^{b-1} \theta_{\ell,k}(\cB_{\ell}^{\text{err}})^2 = \norm{\cB_{\ell}^{\text{err}}}_b^2 
	\ge \norm{\widetilde{f}_{{\ell,0}} - P_{i(\ell)} \widetilde{f}_{{\ell,0}}}^2 = \sigma_{i(\ell)} (\widetilde{f}_{{\ell,0}})^2
	\ge \gamma^2 \sigma_{i(\ell)}^2.
\end{align*}
For all $k=0, \ldots, b_{\ell}-1$, using that $\bar{f}_{\ell,k}$ are normalized and orthogonal to $V_{i(\ell)}$, we have  
\begin{align} \
        \sigma_{i(\ell)} (f_{i(\ell)+k}) 
        &= \sigma_{i(\ell)} (\theta_{\ell,k} \bar{f}_{\ell,k}) 
        = \norm{\theta_{\ell,k} (\bar{f}_{\ell,k} - P_{i(\ell)} \bar{f}_{\ell,k})}
        =\norm{\theta_{\ell,k} \bar{f}_{\ell,k}}
        = \theta_{\ell,k}\label{eq:err_theta}.
    \end{align}
  Combining this, we get $b \, \sigma_{i(\ell)} (f_{i(\ell)})^2 = b \, \theta_{\ell,0}(\cB_{\ell}^{\text{err}})^2 \ge \gamma^2 \sigma_{i(\ell)}^2$, which proves \eqref{prop:firstofbatchPOD}.

To prove \eqref{prop:batchqualityPOD}, express any $n \in \N_0$ as $n = i(\ell) + k$ for some unique $i(\ell)$ and $k \in \{0, b_{\ell}-1\}$. By using the fact that $\{ f_{i(\ell)+j}\}_{j=0}^k$ are pairwise orthogonal,  \eqref{eq:lambda_sv}, \eqref{eq:err_theta} and \eqref{prop:firstofbatchPOD}, we get
\begin{align*}
	\sigma_n (f_n) 
	&= \sigma_{i(\ell)+k} (f_{i(\ell)+k}) = \sigma_{i(\ell)} (f_{i(\ell)+k}) 
        = \theta_{\ell,k}
	\ge \lambda \theta_{\ell,0} = \lambda \sigma_{i(\ell)} (f_{i(\ell)})\\
	&\ge \lambda \frac{\gamma}{\sqrt{b}} \sigma_{i(\ell)}
         \ge \frac{\lambda \gamma}{\sqrt{b}} \sigma_{i(\ell)+k} = \frac{\lambda \gamma}{\sqrt{b}} \sigma_n,
    \end{align*}
    which concludes the proof.
\end{proof}

\section{Error analysis}
\label{Sec:Analysis}
We are now going to present the error analysis of both variants of the batch greedy algorithm. This reduces mainly to minor modifications of the proofs already published in \cite{weakgreedy,weakgreedy2}. The modifications are highlighted in \hue{blue} and essentially mark the influence of the parameter $\lambda$ and the batch size $b$. We analyze the bulk criterion version in \S \ref{Sec:AnalysisBulk} for $\lambda >0$ and the special case $\lambda=0$ in \S \ref{Sec:Analysis0}. The POD version will be analyzed in \S \ref{Sec:AnalysisPOD}. Here, we will assume that $X$ is a Hilbert space. The generalization of Algorithm \ref{alg:wbg} to general Banach spaces can be found in Appendix \ref{Sec:Appendix_Banach}. We consider the weak versions of the batch greedy algorithms, but choosing $\gamma=1$ yields corresponding results for the strong variant.

With $f_n$, $n \geq 0$ we will always denote the snapshots that span the reduced space (in contrast to the notation of $\widetilde{f}$ and $\bar{f}$ in Algorithm \ref{alg:wbg} and \ref{alg:wbgPOD}). Since in general the greedy algorithm does not terminate, we  set $f_m := 0$ for $m>N$, if the algorithm terminates at $N$, i.e., if $\sigma_N(\cF)=0$. By $(f_n^*)_{n \geq 0}$, we denote the orthogonal system obtained from $(f_n)_{n\in\N_0}$ e.g.\ by Gram-Schmidt. The orthogonal projector $P_n: X\to V_{n}$ is given by $P_n f = \sum_{i=0}^{n-1} \inner{f,f^*_i}\,f^*_i$, and, in particular,
\begin{equation*}
    f_{n} = P_{n+1}\ f_n= \sum_{j=0}^n a_{n,j}f^*_j, 
    \quad\text{where  } a_{n,j} = \inner{f_n,f^*_j}, \; j \leq n.
\end{equation*}
Without loss of generality, we may assume that $X$ is the sequence space $\ell_2(\N_0)$ and $f^*_j=e_j$, where $e_j$ is the unit sequence, i.e., $(e_j)_i=\delta_{j,i}$. Next, consider the (infinite-dimensional) lower triangular matrix (setting $a_{i,j}:=0$ for $j > i$)
\begin{equation}\label{eq:A}
    A := (a_{i,j})_{i,j\in\N_0}, 
\end{equation}
which incorporates all the information about the greedy algorithm. This matrix representation is going to be critical in the error analysis.

\subsection{POD version}
\label{Sec:AnalysisPOD}
We start by presenting the error analysis for the POD as it turns out that the estimate for the bulk criterion version can be derived as a special case. We start by modifying \cite[p.\ 1462]{weakgreedy}, \cite[p.\ 459]{weakgreedy2}.

\begin{lemma}\label{La:PsbgPOD}
	For Algorithm \ref{alg:wbgPOD} and $A$ as in \eqref{eq:A}, we have for all $n\in\N_0$ that 
\begin{enumerate}[\bfseries ({P}1)]
    \item the diagonal elements satisfy $\frac{\hue{\lambda} \gamma}{\hue{\sqrt{b}}} \sigma_n \leq \abs{a_{n,n}} \leq \sqrt{\hue{b}}\,\sigma_{n}$;
    \item for every $m \geq n$ one has $\sum_{j=n}^m a_{m,j}^2 \leq \hue{b}\,\sigma_{n}^2$.\hfill$\qed$
\end{enumerate}
\end{lemma}
\begin{proof}
As in \cite{weakgreedy,weakgreedy2} we have $a_{n,n}^2 = \norm{f_n}^2 - \norm{P_n f_n}^2 = \norm{f_n - P_n f_n}^2$ due to orthogonality. In particular, $a_{n,n} = \inner{f_n,f_n^*} = (f_n)_{n}$ since $X=\ell_2(\N_0)$. Therefore we have $\abs{a_{n,n}} = \norm{f_n - P_n f_n} = \sigma_n(f_n)$. Next, recall from Algorithm \ref{alg:wbgPOD}  that $f_m = f_{i(\ell)+k}= \theta_{{\ell,k}}\, \bar{f}_{{\ell,k}}=\sum_{j=0}^{\hue{b}-1}\omega_{k,j}\, (\widetilde{f}_{{\ell,j}} -P_{i(\ell)} \widetilde{f}_{{\ell,j}})$ for $k=0,...,b_\ell-1$ with $\widetilde{f}_{\ell,j}\in\cF$ as in line \ref{line:batchchoice} of the algorithm (recall the enumeration in Lemma \ref{col:algprop}). Here, $\bomega_k=(\omega_{k,j})_{j=0,...,\hue{b}-1}\in\R^{\hue{b}}$, $\|\bomega_k\|_{\ell_2}=1$, is determined by the SVD in the POD.\footnote{$\norm{\cdot}_{\ell_2}$ denotes the Euclidean norm of a vector in $\R^d$.}\ We are going to show that
\begin{align}\label{eq:sigma_f_POD}
\begin{split}
    \sigma_n(f_m) \leq \sqrt{\hue{b}}\, \sigma_{n}
    \end{split}
\end{align}
for all $m \ge n$. In fact, for $n \ge i(\ell)$ we have
\begin{align*}
    \sigma_n(f_m)
	&=\left\| \sum_{j=0}^{\hue{b}-1} \omega_{k,j} \left[(\widetilde{f}_{{\ell,j}} -P_{i(\ell)} \widetilde{f}_{{\ell,j}}) - P_n (\widetilde{f}_{{\ell,j}} -P_{i(\ell)} \widetilde{f}_{{\ell,j}})\right] \right\| \\
	&=\left\| \sum_{j=0}^{\hue{b}-1} \omega_{k,j} \left[\widetilde{f}_{{\ell,j}} -P_{n} \widetilde{f}_{{\ell,j}}\right] \right\|
	 \leq \sum_{j=0}^{\hue{b}-1} |\omega_{k,j}| \max_{f \in \cF} \norm{f - P_n f} \\
	&\leq \sqrt{\hue{b}}\, \|\bomega_k\|_{\ell_2}\,\sigma_{n}(\cF) = \sqrt{\hue{b}}\, \sigma_{n}
\end{align*}
and for $n \le i(\ell)$ we conclude
\begin{align*}
    \sigma_n(f_m)
	&=\left\| \sum_{j=0}^{\hue{b}-1} \omega_{k,j} \left[(\widetilde{f}_{{\ell,j}} -P_{i(\ell)} \widetilde{f}_{{\ell,j}}) - P_n (\widetilde{f}_{{\ell,j}} -P_{i(\ell)} \widetilde{f}_{{\ell,j}})\right] \right\| \\
	&=\left\| \sum_{j=0}^{\hue{b}-1} \omega_{k,j} (\widetilde{f}_{{\ell,j}} -P_{i(\ell)} \widetilde{f}_{{\ell,j}}) \right\|
	 \leq \sqrt{\hue{b}}\, \|\bomega_k\|_{\ell_2}\,\sigma_{i(\ell)}(\cF) = \sqrt{\hue{b}}\, \sigma_{i(\ell)} \le \sqrt{\hue{b}}\, \sigma_{n},
\end{align*}
which proves \eqref{eq:sigma_f_POD}.
The special case $m=n$ proves the upper inequality in \emph{\textbf{(P1)}} since $\abs{a_{n,n}} = \sigma_n(f_n)$.
The lower inequality in \emph{\textbf{(P1)}} directly follows from Lemma \ref{col:algpropPOD} \eqref{prop:batchquality} since $\frac{\hue{\lambda}\gamma}{\sqrt{\hue{b}}}  \sigma_n \leq \sigma_n (f_n) = \abs{a_{n,n}}$.
The proof of  \emph{\textbf{(P2)}} follows by the same arguments as in \cite{weakgreedy,weakgreedy2} {and again \eqref{eq:sigma_f_POD}}: We have for $m \geq n$
\begin{align*}
	\sum_{j=n}^m a_{m,j}^2 
	= \norm{f_m - P_n f_m}^2 
	=  \sigma_n (f_m)^2
	\leq \hue{b}\, \sigma_n^2,
\end{align*}
which concludes the proof.
\end{proof}

The following result from \cite{weakgreedy2} is a key to continue with the analysis.

\begin{lemma}[{\cite[Lemma 2.1]{weakgreedy2}}] \label{lemma:main}
Let $G=(g_{i,j})\in\R^{K\times K}$, $K\in\N$, be a lower triangular matrix with rows $\mathbf{g}_1,\ldots,\mathbf{g}_K$. If $W_m\subset\R^K$, $m<K$, is an $m$-dimensional subspace of with the associated orthogonal projection $P_m$, then
\begin{equation*}
	\pushQED{\qed} 
    \prod_{i=1}^{K} g_{i,i}^2 
    \leq \left[ \frac{1}{m} 
            \sum_{i=1}^K \norm{P_m \mathbf{g}_i}_{\ell_2}^2
        \right]^m  
        \left[ \frac{1}{K-m} 
            \sum_{i=1}^K \norm{\mathbf{g}_i 
                - P_m \mathbf{g}_i}_{\ell_2}^2 
        \right]^{K-m}.
        \qedhere\popQED
\end{equation*}
\end{lemma}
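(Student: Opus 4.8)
The plan is to reinterpret the left-hand side as a squared volume and then split that volume according to the orthogonal decomposition $\R^K = W_m \oplus W_m^\perp$. Since $G$ is lower triangular, $\prod_{i=1}^K g_{i,i}^2 = \det(G)^2 = \det(GG^\top)$, which is the squared volume of the parallelepiped spanned by the rows $\mathbf{g}_1,\ldots,\mathbf{g}_K$. Because this determinant is unchanged if we multiply $G$ on the right by an orthogonal matrix, I would first choose an orthonormal basis $u_1,\ldots,u_m$ of $W_m$ together with an orthonormal basis $u_{m+1},\ldots,u_K$ of $W_m^\perp$, collect them as the rows of an orthogonal matrix $U$, and pass to $H := GU^\top$. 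Then $\det(H)^2 = \det(G)^2$, and the entry $H_{i,j} = \inner{\mathbf{g}_i, u_j}$ is the $u_j$-coordinate of $\mathbf{g}_i$.

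Writing $H = [\,H_1 \mid H_2\,]$ with $H_1 \in \R^{K\times m}$ (columns $1,\ldots,m$) and $H_2 \in \R^{K\times(K-m)}$ (columns $m+1,\ldots,K$), the first block carries exactly the $W_m$-components and the second the $W_m^\perp$-components. Hence the $i$-th row of $H_1$ has Euclidean norm $\norm{P_m\mathbf{g}_i}_{\ell_2}$ and the $i$-th row of $H_2$ has norm $\norm{\mathbf{g}_i - P_m\mathbf{g}_i}_{\ell_2}$, so that $\operatorname{tr}(H_1^\top H_1) = \sum_{i=1}^K \norm{P_m\mathbf{g}_i}_{\ell_2}^2$ and $\operatorname{tr}(H_2^\top H_2) = \sum_{i=1}^K \norm{\mathbf{g}_i - P_m\mathbf{g}_i}_{\ell_2}^2$. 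These are precisely the quantities appearing on the right-hand side, so it remains to bound $\det(H)^2$ by $\det(H_1^\top H_1)\,\det(H_2^\top H_2)$ and then estimate each factor.

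The crucial step, and the one I expect to be the main obstacle, is the inequality $\det(H)^2 \le \det(H_1^\top H_1)\,\det(H_2^\top H_2)$. To obtain it, I would expand $\det(H) = \det[\,H_1\mid H_2\,]$ by the generalized Laplace rule along the first $m$ columns,
\begin{equation*}
    \det(H) = \sum_{\abs{S}=m} \varepsilon_S\, \det\big((H_1)_{S}\big)\,\det\big((H_2)_{S^c}\big),
\end{equation*}
the sum running over all $m$-element row-index sets $S\subseteq\{1,\ldots,K\}$, with $(H_1)_S$ the $m\times m$ submatrix on rows $S$, $(H_2)_{S^c}$ the complementary $(K-m)\times(K-m)$ submatrix, and $\varepsilon_S\in\{\pm1\}$. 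Cauchy--Schwarz over the index $S$ then gives $\det(H)^2 \le \big(\sum_S \det((H_1)_S)^2\big)\big(\sum_S \det((H_2)_{S^c})^2\big)$, and by the Cauchy--Binet formula the two sums equal $\det(H_1^\top H_1)$ and $\det(H_2^\top H_2)$, respectively.

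Finally, since $H_1^\top H_1$ is symmetric positive semidefinite of size $m$, the arithmetic--geometric mean inequality applied to its eigenvalues yields $\det(H_1^\top H_1) \le \big(\operatorname{tr}(H_1^\top H_1)/m\big)^m$, and likewise $\det(H_2^\top H_2) \le \big(\operatorname{tr}(H_2^\top H_2)/(K-m)\big)^{K-m}$. Substituting the trace identities from the second paragraph and chaining the estimates produces exactly the claimed bound. The only points needing genuine care are the orthogonal invariance of the determinant in the first step and the sign/index bookkeeping in the Laplace--Cauchy--Binet step; the remaining manipulations are routine.
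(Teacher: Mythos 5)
Your proof is correct and complete. Note that the paper itself offers no proof of this lemma at all: it is imported verbatim from \cite{weakgreedy2} (Lemma 2.1 there), which is why the statement ends with a qed box. Your argument is essentially the one given in that cited reference: pass to an orthonormal basis adapted to the splitting $\R^K = W_m \oplus W_m^{\perp}$ so that $\det(G)^2 = \det(H)^2$ with $H = [\,H_1 \mid H_2\,]$, establish $\det(H)^2 \le \det(H_1^{\top}H_1)\,\det(H_2^{\top}H_2)$ via Laplace expansion, Cauchy--Schwarz and Cauchy--Binet (this intermediate inequality is exactly Fischer's inequality for the Gram matrix $H^{\top}H$), and finish with the arithmetic--geometric mean inequality applied to the eigenvalues of the two Gram blocks. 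All steps check out, including the sign bookkeeping: the signs $\varepsilon_S$ are squared away by Cauchy--Schwarz, and the complementation $S \mapsto S^c$ is a bijection between $m$-element and $(K-m)$-element row-index sets, so both sums are precisely the Cauchy--Binet expansions claimed.
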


Next, we adapt \cite[Thm.\ 3.2]{weakgreedy2} to our algorithm.

\begin{theorem}\label{th:wbg_hilbert}
	Let $X$ be a Hilbert space, $N \in \N_0$, $K\in\N$ and $1 \leq m<K$. Then, for Algorithm \ref{alg:wbgPOD} with constants $\hue{\lambda} \in (0,1]$ and $\gamma \in (0,1]$, we have
	\begin{equation}
	\prod_{i=1}^K \sigma_{N+i}^2 
	\leq \frac{\hue{b}^{{K+}m} }{(\hue{\lambda} \gamma)^{2K}} \left[ \frac{K}{m} \right]^m 
		\left[ \frac{K}{K-m} \right]^{K-m}
		\sigma_{N+1}^{2m}\, d_m^{2K-2m}.
	\end{equation}
\end{theorem}

\begin{proof}
The proof essentially coincides with the proof of \cite[Thm.\ 3.2]{weakgreedy2}, we only need the adapted properties from Lemma \ref{La:PsbgPOD} that change the statement. We consider the $K \times K$ matrix $G=(g_{i,j})_{i,j=1,...,K}$ which is formed by the rows and columns of $A$ with indices from $\{ N+1, \ldots, N+K\}$. Each row $\mathbf{g}_i$ is the restriction of $f_{N+i}$ to the coordinates $N+1, \ldots, N+K$. 

Let $X_m$ be an $m$-dimensional optimal subspace of $X$ in the sense of Kolmogorov, i.e., $\dist(\cF,X_m)=d_m$, $m=1,\ldots, K$. Let $\widetilde{W}$ be the linear space airing from the restriction of $X_m$ to the coordinates $N+1, \ldots, N+K$. Hence, $\dim(\widetilde{W}) \leq m$. Let $W$ be an $m$-dimensional space, $W \subset \spann \{ e_{N+1}, \ldots, e_{N+K}\}$, such that $\widetilde{W} \subset W$ and $P$ and $\widetilde{P}$ are the projections in $\R^K$ onto $W$ and $\widetilde{W}$, respectively. As in \cite[Thm.\ 3.2]{weakgreedy2} for the classical weak greedy we have
\begin{equation}\label{eq:Pg}
	\norm{P \mathbf{g}_i}_{\ell_2} 
	\leq \norm{\mathbf{g}_i}_{\ell_2} 
	\leq \sqrt{\hue{b}}\,\sigma_{N+1}, \quad i=1,\ldots,K,
\end{equation}
by using property \emph{\textbf{(P2)}}, as well as for $i=1,\ldots, K$
\begin{align}\label{eq:g_Pg}
	\norm{\mathbf{g}_i - P \mathbf{g}_i}_{\ell_2} 
	&\leq \norm{\mathbf{g}_i - \widetilde{P} \mathbf{g}_i}_{\ell_2} 
	= \dist(\mathbf{g}_i, \widetilde{W})
	\leq \dist(f_{N+i},X_m) \leq d_m.
\end{align}
From the (adapted) property \emph{\textbf{(P1)}} it follows that
\begin{equation}\label{eq:prod_p2}
	\prod_{i=1}^K \abs{a_{N+i,N+i}} 
	= \prod_{i=1}^K \sigma_{N+i}(f_{N+i}) 
	\geq \left(\frac{\hue{\lambda} \gamma}{{ \sqrt{\hue{b}} } }\right)^K \prod_{i=1}^K \sigma_{N+i}.
\end{equation}

Using \eqref{eq:prod_p2} together with Lemma \ref{lemma:main}, \eqref{eq:Pg} and \eqref{eq:g_Pg} we get
\begin{align*}
	\kern-50pt
	\prod_{i=1}^K \sigma_{N+i}^2 
	&\leq \left(\frac{{ \sqrt{\hue{b}} } }{\hue{\lambda} \gamma} \right)^{2K} \prod_{i=1}^K \abs{a_{N+i,N+i}}^2
	= \left(\frac{{ \sqrt{\hue{b}} } }{\hue{\lambda} \gamma} \right)^{2K} \prod_{i=1}^K \abs{g_{i,i}}^2\\
	&\leq \left(\frac{{ \sqrt{\hue{b}} } }{\hue{\lambda} \gamma} \right)^{2K} \left[ \frac{1}{m} 
		\sum_{i=1}^K \norm{P \mathbf{g}_i}_{\ell_2}^2\right]^m 
		\left[ \frac{1}{K-m} \sum_{i=1}^K \norm{\mathbf{g}_i - P \mathbf{g}_i}_{\ell_2}^2\right]^{K-m}\\
	&\leq \left(\frac{{ \sqrt{\hue{b}} } }{\hue{\lambda} \gamma} \right)^{2K} 
		\left[ \frac{1}{m} \sum_{i=1}^K \hue{b}\, \sigma_{N+1}^2 \right]^m 
		\left[ \frac{1}{K-m} \sum_{i=1}^K d_m^2\right]^{K-m}
		\\
	&
	= \frac{ \hue{b}^{{K+}m}  }{(\hue{\lambda} \gamma)^{2K}} \left[ \frac{K}{m} \right]^m 
		\left[ \frac{K}{K-m} \right]^{K-m}
		\sigma_{N+1}^{2m}\, d_m^{2K-2m},
\end{align*}
which completes the proof.
\end{proof}

We note some special cases as a generalization of \cite[Cor.\ 3.3]{weakgreedy2}, which generalize \cite[Thm.\ 3.1]{weakgreedy} and \cite[Thm.\ 3.2]{weakgreedy}.

\begin{theorem}\label{col:wbg_hilbert}
	For Algorithm \ref{alg:wbgPOD} with constant $\gamma \in (0,1]$, bulk parameter $\hue\lambda \in (0,1]$ and batch size $\hue b \ge 1$ we have:
	\begin{enumerate}[(i)]
	\item For any compact set $\cF$ and $n \geq 1$, we have
		\begin{equation}
		\label{eq:wbg_sigman_dn}
			\sigma_{n}(\cF) \leq \sqrt{2}(\hue{\lambda}\gamma)^{-1} 
			\min_{1 \leq m < n} \hue{b}^{\frac{{n+}m}{2n}}\, d_m(\cF)^{\frac{n-m}{m}}.
		\end{equation}
		In particular, $\sigma_{2n}(\cF) \leq \sqrt{2}\,\hue{b}^{{3}/4}\,(\hue{\lambda}\gamma)^{-1} \sqrt{d_n(\cF)}$ for $n\in\N$.
	\item If $d_n(\cF) \leq C_0 n^{-\alpha}$, $n\in\N$, then we have $\sigma_n(\cF) \leq C_1(\hue{\lambda},\hue{b})\, n^{-\alpha}$ with\\
		$
			C_1(\hue{\lambda},\hue{b}) 
				:= \max \left \lbrace \frac{2^{5\alpha + 1}}{\hue{\lambda}^2 \gamma^2} \hue{b}^{{{3}/{2}}}\,C_0,
                3^{\alpha} \right \rbrace.
		$
	\item If $d_n(\cF) \leq C_0 e^{-c_0 n^{\alpha}}$, $n\in\N$, then $\sigma_n(\cF) \leq C_1(\hue{\lambda},\hue{b})\, e^{-c_1(\hue{\lambda},\hue{b})\, n^{\alpha}}$ with
		\begin{equation*}
			 C_1(\hue{\lambda},\hue{b})
             := \max \left \lbrace \frac{\sqrt{2 C_0}\,}{\hue{\lambda} \gamma}\, \hue{b}^{{{3}}/{4}},
             1 + \epsilon \right \rbrace
             \quad \text{and} \quad
             c_1(\hue{\lambda},\hue{b}) := \min \left \lbrace 2^{-1-2\alpha}c_0,
             \ln(C_1(\hue{\lambda},\hue{b})) \right \rbrace
		\end{equation*}
        for any $\epsilon > 0$.
\end{enumerate}
\end{theorem}
\begin{proof} 
\textbf{(i)} We take, as in \cite{weakgreedy2}, $N=0$, $K=n$, and any $1 \leq m <n$ in Theorem \ref{th:wbg_hilbert}, and use the monotonicity of $(\sigma_n)_{n\geq 0}$ and the fact that $\sigma_0 \leq 1$ to obtain
		\begin{align*}
			\sigma_{n}^{2n} 
			&\leq \prod_{i=1}^{n} \sigma_{i}^2 \leq \frac{\hue{b}^{{n+}m}}{(\hue{\lambda} \gamma)^{2n}} 
			\left[ \frac{n}{m}\right]^m 
			\left[ \frac{n}{n-m}\right]^{n-m} 
			\sigma_{1}^{2m}\, d_m^{2n -2m}\\
        \intertext{which is equivalent to}
			\sigma_{n} 
			&\leq \frac{\hue{b}^{\frac{{n+}m}{2n}}}{\hue{\lambda} \gamma} 
			\left( \left[ \frac{n}{m}\right]^{\frac{m}{n}} 
			\left[ \frac{n}{n-m}\right]^{\frac{n-m}{n}} \right)^{1/2} 
			\!\!\sigma_{1}^{\frac{m}{n}} d_m^{\frac{n-m}{n}}
			\leq \frac{\hue{b}^{\frac{{n+}m}{2n}}}{\hue{\lambda} \gamma}
            \sqrt{2}\, \sigma_{0}^{\frac{m}{n}}\,d_m^{\frac{n-m}{n}}\\
			&\leq \sqrt{2}\, (\hue{\lambda} \gamma)^{-1} 
				\min_{1 \leq m < n} \hue{b}^{\frac{{n+}m}{2n}}\, d_m^{\frac{n-m}{n}},
		\end{align*}
	where we used that $x^{-x}(1-x)^{x-1} \leq 2$ for $0 <x = \frac{m}{n} < 1$.

\noindent
\textbf{(ii)} Now, we use Theorem \ref{th:wbg_hilbert} with $N=K=n$, and any $1 \leq m <n$ to obtain
	\begin{align*}
		\sigma_{2n}^{2n} 
		&\leq \prod_{j=n+1}^{2n} \sigma_{j}^2 
		\leq \frac{\hue{b}^{{n+}m}}{(\hue{\lambda} \gamma)^{2n}}
			\left[ \frac{n}{m}\right]^m 
			\left[ \frac{n}{n-m}\right]^{n-m} 
			\sigma_{n+1}^{2m}\, d_m^{2n -2m},\\
		\intertext{which is equivalent to}
		\sigma_{2n} 
		&\leq \frac{\hue{b}^{\frac{{n+}m}{2n}}}{\hue{\lambda} \gamma}
			\left( \left[ \frac{n}{m}\right]^{\frac{m}{n}} 
				\left[ \frac{n}{n-m}\right]^{\frac{n-m}{n}} \right)^{\frac{1}{2}}
				\sigma_{n}^{\frac{m}{n}} d_m^{\frac{n-m}{n}}
		\leq \frac{\sqrt{2}\,\hue{b}^{\frac{{n+}m}{2n}}}{\hue{\lambda} \gamma}\, 
			\sigma_{n}^{\frac{m}{n}}\, d_m^{\frac{n-m}{n}}.
	\end{align*}
	In the case $n=2s$ and $m=s$, we have $\sigma_{4s} \leq \sqrt{2}\, (\hue\lambda\gamma)^{-1} \hue{b}^{{3}/4}\,\sqrt{\sigma_{2s}\,d_s}$. We now prove the claim by contradiction. Suppose that the statement in (ii) does not hold and let $M$ be the first value where $\sigma_M>C_1(\hue\lambda,\hue{b})\,M^{-\alpha}$.
	
\noindent
\emph{Case 1:} $M =4 s$ with $s\geq 1$: Since $M=4s$ is the first value for which $\sigma_M>C_1(\hue\lambda,\hue{b})\,M^{-\alpha}$ holds and $2s<4s$, we have $\sigma_{2s} \leq C_1(\hue\lambda,\hue{b})\, (2s)^{-\alpha}$ and using $d_s \leq C_0 s^{-\alpha}$ yields
	\begin{equation}\label{eq:sigma_4bs}
		C_1(\hue\lambda,\hue{b})\, \left( 4 s \right)^{-\alpha} 
		< \sigma_{4s}
		\leq (\hue{\lambda} \gamma)^{-1}\, \hue{b}^{{3}/4}\, \sqrt{2\sigma_{2s}d_s}
		\leq (\hue{\lambda} \gamma)^{-1} s^{-\alpha}\, \hue{b}^{{3}/4}\, \sqrt{2^{1-\alpha} C_0 C_1(\hue\lambda,\hue{b})\,} 
	\end{equation}
	which yields the desired contradiction by the estimate $C_1(\hue\lambda,\hue{b}) < 2^{3\alpha + 1}\, \hue{b}^{{3}/2}\, (\hue{\lambda} \gamma)^{-2}  C_0 < 2^{5\alpha + 1} (\hue{\lambda} \gamma)^{-2}  \,\hue{b}^{{3}/2}\, C_0$.

\noindent
\emph{Case 2:} $M = 4 s + q$ with $q \in \left\lbrace  1, 2, 3 \right\rbrace$ for $s\geq 1$:
	Note that $4s+q < 2 \cdot 4s$. It then follows from \eqref{eq:sigma_4bs} that
	\begin{align*}
		C_1(\hue\lambda,\hue{b})\, 2^{-\alpha} \left( 4s \right)^{-\alpha} 
		&< C_1(\hue\lambda,\hue{b})\, \left( 4 s + q\right)^{- \alpha}
		< \sigma_{4 s + q} 
		\leq \sigma_{4 s}\\
		&\leq (\hue{\lambda} \gamma)^{-1} s^{-\alpha}\, 
		\,\hue{b}^{{3}/4}\,
		\sqrt{2^{1-\alpha} C_0 C_1(\hue\lambda,\hue{b})\,}.
	\end{align*}
	This yields the desired contradiction $C_1(\hue\lambda,\hue{b})\, < 2^{5\alpha + 1} (\hue{\lambda} \gamma)^{-2}  \,\hue{b}^{{3}/2}\,C_0$.

\noindent
\emph{Case 3:}  $1 \leq M \leq 3$: From the assumption $\norm{f}_X \leq 1$, the definition of $C_1(\hue\lambda,\hue{b})$ and the monotonicity of $(\sigma_n)_{n\geq 0}$ we obtain the contradiction as $1 \geq \sigma_0 \geq \sigma_M > C_1(\hue\lambda,\hue{b})\, M^{-\alpha}  \geq C_1(\hue\lambda,\hue{b})\, 3^{-\alpha}  \geq 1$.
\smallskip

\noindent
\textbf{(iii)} From (i), we have
\begin{align}\label{eq:sigma_2nb_exp}
		\sigma_{2s} 
		\leq \sqrt{2} (\hue{\lambda} \gamma)^{-1} \,\hue{b}^{{3}/4}\,\sqrt{d_s}
		\leq \sqrt{2} (\hue{\lambda} \gamma)^{-1} \,\hue{b}^{{3}/4}\,\sqrt{C_0\, e^{-c_0 s^\alpha}}
        = C_1(\hue\lambda,\hue{b})\, e^{-c_0\, 2^{-1-\alpha} (2s)^\alpha}.
\end{align}
We prove the claim again by contradiction. Suppose that the statement (iii) does not hold and let $M$ be the first value where $\sigma_M(\cF)>C_1(\hue\lambda,\hue{b})\, e^{-c_1 M^\alpha}$.\\
\emph{Case 1:} $M = 2 s$ with $s \geq 1$:  With \eqref{eq:sigma_2nb_exp}, we get the estimate
	$C_1(\hue\lambda,\hue{b})\, e^{-c_1(\hue\lambda,\hue{b})\, \left( 2 s \right)^\alpha} < \sigma_{2s} \leq C_1(\hue\lambda,\hue{b})\, e^{-c_0\, 2^{-1-\alpha} (2s)^\alpha}$. This yields the desired contradiction by comparison of the exponents, namely, $c_1(\hue\lambda,\hue{b}) > 2^{-1-\alpha} c_0 > 2^{-1-2\alpha} c_0$.\\
\emph{Case 2:} $M = 2 s + 1$ for $s \geq 1$: Note that $2 \cdot 2s > 2s+1$. It then follows from \eqref{eq:sigma_2nb_exp} that
	\begin{align*}
	C_1(\hue\lambda,\hue{b})\, e^{-c_1(\hue\lambda,\hue{b})\, \left(2 \cdot 2 s \right)^\alpha} 
	< C_1(\hue\lambda,\hue{b})\, e^{-c_1(\hue\lambda,\hue{b})\, \left( 2 s + 1\right)^\alpha} 
	&< \sigma_{2 s + 1}
	\leq \sigma_{2s}
	\leq C_1(\hue\lambda,\hue{b})\, e^{-c_0\, 2^{-1-\alpha} (2s)^\alpha}.
	\end{align*}
	Comparing the exponents yields a contradiction, i.e., $c_1(\hue\lambda,\hue{b})\, >  2^{-1-2\alpha} c_0$.\\
\emph{Case 3:} $M=1$: From the assumption that $\norm{f}_X \leq 1$ and the definition of $c_1(\hue\lambda, \hue b)\,$ we see the desired contradiction by 
	$1 \geq \sigma_0 \geq \sigma_M = \sigma_1 > C_1(\hue\lambda,\hue{b})\, e^{-c_1(\hue\lambda,\hue{b})\, 1^\alpha} \geq 1$. By definition, we have $C_1(\hue\lambda,\hue{b}) > 1$ which ensures that $c_1(\hue\lambda,\hue{b}) > 0$.
\end{proof}

\subsection{Bulk criterion version} \label{Sec:AnalysisBulk}

Next, we, present the error analysis for the bulk cri\-te\-ri\-on. It is easily seen that the estimates in Lemma \ref{La:PsbgPOD} can be sharpened.
\begin{lemma}\label{La:Psbg}
	For Algorithm \ref{alg:wbg} and $A$ as in \eqref{eq:A}, we have for all $n\in\N_0$ that 
\begin{enumerate}[\bfseries ({P}1)]
    \item the diagonal elements satisfy $\hue{\lambda} \gamma \sigma_n \leq \abs{a_{n,n}} \leq \sigma_{n}$;
    \item for every $m \geq n$ one has $\sum_{j=n}^m a_{m,j}^2 \leq \sigma_{n}^2$.
\end{enumerate}
\end{lemma}
\begin{proof}
As {$f_m\in\cF$ for every $m\in\N_0$}, we can modify \eqref{eq:sigma_f_POD} as follows:\\
 $\sigma_n(f_m)
    = \norm{f_m - P_n f_m}
    \leq \max_{f \in \cF} \norm{f - P_n f} 
    = \sigma_{n}(\cF) = \sigma_{n}$
for any $m\ge n$. The rest of the proof is analogous to Lemma \ref{La:PsbgPOD}. The lower inequality of \textbf{(P1)} follows using Lemma \ref{col:algprop} instead of Lemma \ref{col:algpropPOD}.
\end{proof}

Apparently, Lemma \ref{La:Psbg} corresponds to Lemma \ref{La:PsbgPOD} for ${b}=1$. The same holds true for Theorem \ref{th:wbg_hilbert} applied to Algorithm \ref{alg:wbg}. Consequently, the following statement is a direct consequence of Theorem \ref{col:wbg_hilbert} using ${b}=1$.

\begin{corollary}
	For Algorithm \ref{alg:wbg} with constant $\gamma \in (0,1]$ and relative tolerance $\hue \lambda \in (0,1]$ we have:
	\begin{enumerate}[(i)]
	\item If $d_n(\cF) \leq C_0 n^{-\alpha}$, $n\in\N$, then $\sigma_n(\cF) \leq C_1(\hue{\lambda})\, n^{-\alpha}$, where the constant reads
		$C_1(\hue{\lambda}) := \max \left \lbrace \frac{2^{5\alpha + 1}}{\hue{\lambda}^2 \gamma^2} C_0, 3^{\alpha} \right \rbrace$.
	\item If $d_n(\cF) \leq C_0 e^{-c_0 n^{\alpha}}$, $n\in\N$, then $\sigma_n(\cF) \leq C_1(\hue{\lambda})\, e^{-c_1(\hue{\lambda})\, n^{\alpha}}$ with the constants
		$C_1(\hue{\lambda}) := \max \left \lbrace \frac{\sqrt{2 \, C_0}}{\hue{\lambda} \gamma}, 1 + \epsilon \right \rbrace$
             as well as $c_1(\hue{\lambda}) := \min \left \lbrace 2^{-1-2\alpha}c_0,
             \ln(C_1(\hue{\lambda})) \right \rbrace$
        for any $\epsilon > 0$.
        \hfill$\qed$
\end{enumerate}
\end{corollary}

\begin{remark}\label{rem:theory}
\begin{compactenum}[(a)]
\item Instead of obtaining the error analysis of the bulk version as a special case of the POD variant, one can also adapt \cite[\S 3]{weakgreedy2} by substituting $\gamma$ by $\lambda \gamma$ to obtain the same results. Therefore, for $\lambda=1$, we recover the known result for the classical weak greedy (CWG) algorithm. 
\item The choice of the batch size ${b}$ does not directly influence the convergence rate as the bulk criterion is relevant for adding snapshots to the reduced basis. However, the choice of $\lambda$ influences how many (pre-computed) snapshots are added to the reduced space and therefore impact the runtime of the algorithm (see also \S \ref{Sec:NumExp} below).\hfill$\diamond$
\end{compactenum}
\end{remark}

In summary, both versions of the batch greedy method have the same \emph{asymptotic} rate of convergence.

\subsection{Special case: $\lambda=0$, i.e., no bulk criterion}
\label{Sec:Analysis0}
When setting $\lambda=0$, all snapshots in the batch are added to the reduced basis, i.e., there is no bulk criterion and no POD.\footnote{In fact, for $\lambda=0$ both variants produce the same space.} However, adding all elements of the batch might not be a good idea as the following example shows, which was provided to us by an anonymous referee.

\begin{example} \label{ex:worstcase}
	Let ${b}=2$ and consider
	\begin{equation*}
		\cF := \left\lbrace \frac{1}{n+1} e_n: n\in\N_0\right\rbrace \cup \left\lbrace \frac{1-\varepsilon}{n+1} e_n : n\in\N_0\right\rbrace \subset \ell_2(\N_0)
\end{equation*}
with $\varepsilon>0$ small. For this set and an appropriately chosen $\gamma$, Algorithm \ref{alg:wbg} with $\lambda=0$ will choose $f_{2n}=\frac{1}{n+1}e_n$ and $f_{2n+1} = \frac{1-\varepsilon}{n+1} e_n$, so the snapshots within the batch will be linear dependent.\hfill$\diamond$
\end{example}

\subsection{How to choose the elements in the batch?}\label{Sec:bminusone}
The above analysis relies on the fact that the \emph{first} parameter in the batch corresponds to the maximal value of the error indicator. The remaining $b-1$ elements could be chosen arbitrarily. The bulk criterion or the POD in fact ensure that only significant information is added to the reduced basis. The above convergence statements remain valid. We will consider this random variant in our subsequent numerical experiments.

Of course, one could also determine the whole batch by randomly chosen samples. In that case, however, convergence is no longer guaranteed by our above analysis.

\section{Numerical Experiments}
\label{Sec:NumExp}
Finally, we present results from some of our numerical experiments concerning the (parallel) weak batch greedy algorithm.

\subsection{Implementation and hardware}\label{sec:implhard}
Our implementation is based upon the well-known \emph{Python}-based model order reduction library named \emph{pyMOR}, \cite{pymor2, pymor}. The weak batch greedy algorithm, the error estimator and the parallel computation of the snapshots use an \emph{MPI}-based worker pool enabled by \emph{pyMOR}, \cite{mpi4py,mpi40}. Moreover, \emph{pyMOR} uses \emph{NumPy} and \emph{SciPy} with backends from \emph{OpenBLAS}, \cite{numpy,scipy,openblas}. All computations have been performed on a \emph{Linux} server with two \emph{Intel Xeon Gold 6230} CPUs with 20 cores each and 754 GB RAM. If not specified otherwise, the \emph{MPI}-based worker pool was used with 30 workers, i.e., we use up to 30 CPU cores in parallel.

The snapshot computation boils down to a sparse LU factorization using \emph{SciPy}s routine \texttt{splu}, which is based upon  \emph{SuperLU}, \cite{superlu, splu}. In turn, \emph{SuperLU} is a parallelized implementation that uses \emph{BLAS} and \emph{LAPACK} backends. Unfortunately, we observed that \texttt{splu} was fastest on a single core and got slightly slower the more cores it was allowed to use. Therefore we configurated \emph{NumPy} to run single-core and only parallelized via \emph{MPI}.

The open source software for all our experiments is available on the \texttt{github} site \url{https://github.com/niklasreich/parallel_batch_greedy}.

\subsection{Model problem}

We choose the thermal block problem, which is a well-known and widely studied model problem for model reduction, see for example \cite[Rem.\ 3.6]{quarteroni2015reduced},  \cite[\S 6.1.4]{hesthaven2016certified}. This choice allows us to investigate the effect of taking a batch in the offline phase in comparison to the standard greedy scheme.  Of course, the batch greedy algorithm is applicable (and most likely more meaningful) for more demanding problems. However, then one often needs additional components like the \emph{(discrete) empirical interpolation method}, whose consideration would pollute the differences between the batch greedy and the classical greedy algorithm. Therefore, we stick to this rather simple \enquote{fruit fly} problem.

Let $\Omega := (0,1)^2 = \cup_{p=1}^{P} \Omega_{p}$ be subdivided in $P=p_x\times p_y$ sub-blocks. We report results for $p_x=p_y=2$, $P=4$ and $p_x=2, p_y=3$, $P=6$.
The bilinear form of the corresponding Dirichlet problem reads $a_\mu(u,v):=\sum_{p=1}^P \mu_p (\nabla u, \nabla v)_{L_2(\Omega_p)}$ and choose $\mu\in \cP := [0.1, 1]^P \subset \R^P$. With respect to \S \ref{Sec:Analysis}, we have  $X = H_0^1(\Omega)$ and $\cF = \left\lbrace u(\mu) : \mu \in \cP \right\rbrace$. 

We use the \emph{pyMOR} demo \texttt{thermalblock} as a basis implementation and discretize  $\Omega$ by \emph{pyMOR} using triangular elements with a maximal diameter of $10^{-3}$. This results in $2.002.001$ degrees of freedom. This fine grid is chosen in order to ensure that the snapshot computation is significant for parallel computing. The parameter space $\cP := [0.1, 1]^P$ is discretized for $P=4$ by 25 and for $P=6$ by $10$ equidistant points per dimension, resulting in a training set of size $25^{4}$ and $10^6$, respectively. 
Corresponding to the 30 workers in our \emph{MPI}-pool (see \S\ref{sec:implhard}) we use a batch size of $b=30$ if not specified otherwise.

\subsection{Batch greedy error decay}
We start by investigating the error of the reduced problem as the number $n$ of basis functions (i.e., the dimension of the reduced problem) increases. It is known from \cite{buffa-maday-patera-prudhomme-turinici-2012,OR16} that the greedy algorithm for the thermal block converges exponentially fast w.r.t.\ $n$, i.e., $\sigma_n(\cF)\le C e^{-cn^\alpha}$ for (unknown) constants $0< \alpha, c,C$. Hence, the classical weak greedy (CWG) sets the benchmark for the batch greedy method.

Figure \ref{fig:err_decay} contains the results using the bulk criterion, where the indicated relative error $\epsilon_{\max,\text{rel}, |\cdot|_{1;\Omega}}(n)$ is maximized over a test set $\cP_{\text{test}}\subset\cP$ w.r.t.\ $| v|_{1;\Omega}:=\|\nabla v\|_{0;\Omega}$ and for $\lambda \in \{ 10^{-5}, 10^{-3}, 0.03, 1\}$ (using batch size $b=30$). Each iteration was stopped when reaching a relative tolerance of $\epsilon_{tol}=10^{-5}$ (indicated by the dashed line). The corresponding results for the POD variant are shown in Figure \ref{fig:err_decay_pod}.

\begin{figure}[!htb]
    \pgfplotstableread{Data/2x2/thermalblock_lambda_cwg.data}{\datarev}
    \pgfplotstableread{Data/2x2/thermalblock_lambda1.0.data}{\datazero}
    \pgfplotstableread{Data/2x2/thermalblock_lambda0.1.data}{\datamone}
    \pgfplotstableread{Data/2x2/thermalblock_lambda0.03.data}{\datamtwo}
    \pgfplotstableread{Data/2x2/thermalblock_lambda0.001.data}{\datamthree}
    \pgfplotstableread{Data/2x2/thermalblock_lambda0.0001.data}{\datamfour}
    \pgfplotstableread{Data/2x2/thermalblock_lambda1e-05.data}{\datamfive}
    \def\vCone{2.0260e5}
    \def\vcone{4.0823}
    \def\valpha{5.74208e-1}
    \def\voffsetdown{1e-2}
    \def\voffsetup{5e3}
\begin{subfigure}[c]{\textwidth}
    \centering
    \begin{tikzpicture}
        \begin{axis}[
            ymode=log,
            xmin = 0, xmax = 26,
            ymin = 8e-7,
            ymax = 1e2,
            ytick = {1e0, 1e-3, 1e-6, 1e-9, 1e-12},
            grid = both,
            minor tick num = 1,
            major grid style = {lightgray},
            minor grid style = {lightgray!25},
            width = 0.9\textwidth,
            height = 0.45\textwidth,
            legend cell align = {left},
            legend style={at={(0.5, 0.95)},anchor= north},
            legend columns = 5,
            xlabel = {size $n$ of the reduced basis},
            ylabel = {$\epsilon_{\max,\text{rel}, |\cdot|_{1;\Omega}}(n)$}
        ]
            \addplot[mark=none, gray, samples=2,domain=0:50, style= dashed, forget plot,thick]
            {1e-5}
            node[pos=0.05, above]{$\epsilon_{tol}$};

            \addplot[matlab4, style= thick, mark = o] table [x = {n}, y = {err}] {\datarev};
            \addlegendentry{CWG}
            
            \addplot[matlab9, style= thick, mark = star] table [x = {n}, y = {err}] {\datazero};
            \addlegendentry{$\lambda=1$}
            
            \addplot[matlab2, style= thick, mark = square] table [x = {n}, y = {err}] {\datamtwo};
            \addlegendentry{$\lambda=0.03$}
            
            \addplot[matlab3, style= thick, mark = diamond] table [x = {n}, y = {err}] {\datamthree};
            \addlegendentry{$\lambda=10^{-3}$}
            
            \addplot[matlab5, style= thick, mark = pentagon] table [x = {n}, y = {err}] {\datamfive};
            \addlegendentry{$\lambda=10^{-5}$}
        \end{axis}
    \end{tikzpicture}
    \subcaption{$2\times 2$ thermal block, bulk criterion}
    \label{fig:err_decay22}
\end{subfigure}
    \pgfplotstableread{Data/2x3/thermalblock_lambda_cwg.data}{\datarev}
    \pgfplotstableread{Data/2x3/thermalblock_lambda1.0.data}{\datazero}
    \pgfplotstableread{Data/2x3/thermalblock_lambda0.1.data}{\datamone}
    \pgfplotstableread{Data/2x3/thermalblock_lambda0.03.data}{\datamtwo}
    \pgfplotstableread{Data/2x3/thermalblock_lambda0.001.data}{\datamthree}
    \pgfplotstableread{Data/2x3/thermalblock_lambda0.0001.data}{\datamfour}
    \pgfplotstableread{Data/2x3/thermalblock_lambda1e-05.data}{\datamfive}
    \def\vCone{0.1089}
    \def\vcone{0.0499}
    \def\valpha{0.9906}
    \def\voffsetdown{3e-1}
    \def\voffsetup{6e1}
\begin{subfigure}[c]{\textwidth}
	\centering    
    \begin{tikzpicture}
        \begin{axis}[
            ymode=log,
            xmin = 0, xmax = 125,
            ymin = 1e-6,
            ymax = 1e2,
            ytick = {1e2, 1e0, 1e-2, 1e-4, 1e-6},
            grid = both,
            minor tick num = 1,
            major grid style = {lightgray},
            minor grid style = {lightgray!25},
            width = 0.9\textwidth,
            height = 0.45\textwidth,
            legend cell align = {left},
            legend style={at={(0.5, 0.95)},anchor= north},
            legend columns = 5,
            xlabel = {size $n$ of the reduced basis},
            ylabel = {$\epsilon_{\max,\text{rel}, |\cdot|_{1;\Omega}}(n)$}
        ]
            \addplot[mark=none, gray, samples=2,domain=0:300, style= dashed, forget plot]
            {1e-5}
            node[pos=0.04, above]{$\epsilon_{tol}$};

            \addplot[matlab4, style= thick] table [x = {n}, y = {err}] {\datarev};
            \addlegendentry{CWG}
            
            \addplot[matlab9, style= thick] table [x = {n}, y = {err}] {\datamone};
            \addlegendentry{$\lambda=1$}
            
            \addplot[matlab2, style= thick] table [x = {n}, y = {err}] {\datamtwo};
            \addlegendentry{$\lambda=0.03$}
            
            \addplot[matlab3, style= thick] table [x = {n}, y = {err}] {\datamthree};
            \addlegendentry{$\lambda=10^{-3}$}

            \addplot[matlab5, style= thick] table [x = {n}, y = {err}] {\datamfive};
            \addlegendentry{$\lambda=10^{-5}$}
        \end{axis}
    \end{tikzpicture}
    \subcaption{$2\times 3$ thermal block, bulk criterion}
    \label{fig:err_decay23}
\end{subfigure}
    \caption{Error decay in $\|\cdot\|_{1;\Omega}$ for different bulk parameters $\lambda$ in comparison with the classical weak greedy (CWG) method}.
    \label{fig:err_decay}
\end{figure}

\begin{figure}[!htb]
    \pgfplotstableread{Data/2x2/thermalblock_lambda_cwg.data}{\datarev}
    \pgfplotstableread{Data/2x2/thermalblock_pod_lambda1.0.data}{\datazero}
    \pgfplotstableread{Data/2x2/thermalblock_pod_lambda0.1.data}{\datamone}
    \pgfplotstableread{Data/2x2/thermalblock_pod_lambda0.03.data}{\datamtwo}
    \pgfplotstableread{Data/2x2/thermalblock_pod_lambda0.001.data}{\datamthree}
    \pgfplotstableread{Data/2x2/thermalblock_pod_lambda0.0001.data}{\datamfour}
    \pgfplotstableread{Data/2x2/thermalblock_pod_lambda1e-05.data}{\datamfive}
    \def\vCone{2.0260e5}
    \def\vcone{4.0823}
    \def\valpha{5.74208e-1}
    \def\voffsetdown{1e-2}
    \def\voffsetup{5e3}
\begin{subfigure}[c]{\textwidth}
    \centering
    \begin{tikzpicture}
        \begin{axis}[
            ymode=log,
            xmin = 0, xmax = 26,
            ymin = 8e-7,
            ymax = 1e2,
            ytick = {1e0, 1e-3, 1e-6, 1e-9, 1e-12},
            grid = both,
            minor tick num = 1,
            major grid style = {lightgray},
            minor grid style = {lightgray!25},
            width = 0.9\textwidth,
            height = 0.45\textwidth,
            legend cell align = {left},
            legend style={at={(0.5, 0.95)},anchor= north},
            legend columns = 5,
            xlabel = {size $n$ of the reduced basis},
            ylabel = {$\epsilon_{\max,\text{rel}, |\cdot|_{1;\Omega}}(n)$}
        ]
            \addplot[mark=none, gray, samples=2,domain=0:50, style= dashed, forget plot,thick]
            {1e-5}
            node[pos=0.05, above]{$\epsilon_{tol}$};

            \addplot[matlab4, style= thick, mark = o] table [x = {n}, y = {err}] {\datarev};
            \addlegendentry{CWG}
            
            \addplot[matlab9, style= thick, mark = star] table [x = {n}, y = {err}] {\datazero};
            \addlegendentry{$\lambda=1$}
            
            \addplot[matlab2, style= thick, mark = square] table [x = {n}, y = {err}] {\datamtwo};
            \addlegendentry{$\lambda=0.03$}
            
            \addplot[matlab3, style= thick, mark = diamond] table [x = {n}, y = {err}] {\datamthree};
            \addlegendentry{$\lambda=10^{-3}$}
            
            \addplot[matlab5, style= thick, mark = pentagon] table [x = {n}, y = {err}] {\datamfive};
            \addlegendentry{$\lambda=10^{-5}$}
        \end{axis}
    \end{tikzpicture}
    \subcaption{$2\times 2$ thermal block, POD variant}
\end{subfigure}
    \pgfplotstableread{Data/2x3/thermalblock_lambda_cwg.data}{\datarev}
    \pgfplotstableread{Data/2x3/thermalblock_pod_lambda1.0.data}{\datazero}
    \pgfplotstableread{Data/2x3/thermalblock_pod_lambda0.1.data}{\datamone}
    \pgfplotstableread{Data/2x3/thermalblock_pod_lambda0.03.data}{\datamtwo}
    \pgfplotstableread{Data/2x3/thermalblock_pod_lambda0.001.data}{\datamthree}
    \pgfplotstableread{Data/2x3/thermalblock_pod_lambda0.0001.data}{\datamfour}
    \pgfplotstableread{Data/2x3/thermalblock_pod_lambda1e-05.data}{\datamfive}
    \def\vCone{0.1089}
    \def\vcone{0.0499}
    \def\valpha{0.9906}
    \def\voffsetdown{3e-1}
    \def\voffsetup{6e1}
\begin{subfigure}[c]{\textwidth}
	\centering    
    \begin{tikzpicture}
        \begin{axis}[
            ymode=log,
            xmin = 0, xmax = 125,
            ymin = 1e-6,
            ymax = 1e2,
            ytick = {1e2, 1e0, 1e-2, 1e-4, 1e-6},
            grid = both,
            minor tick num = 1,
            major grid style = {lightgray},
            minor grid style = {lightgray!25},
            width = 0.9\textwidth,
            height = 0.45\textwidth,
            legend cell align = {left},
            legend style={at={(0.5, 0.95)},anchor= north},
            legend columns = 5,
            xlabel = {size $n$ of the reduced basis},
            ylabel = {$\epsilon_{\max,\text{rel}, |\cdot|_{1;\Omega}}(n)$}
        ]
            \addplot[mark=none, gray, samples=2,domain=0:300, style= dashed, forget plot]
            {1e-5}
            node[pos=0.04, above]{$\epsilon_{tol}$};

            \addplot[matlab4, style= thick] table [x = {n}, y = {err}] {\datarev};
            \addlegendentry{CWG}
            
            \addplot[matlab9, style= thick] table [x = {n}, y = {err}] {\datamone};
            \addlegendentry{$\lambda=1$}
            
            \addplot[matlab2, style= thick] table [x = {n}, y = {err}] {\datamtwo};
            \addlegendentry{$\lambda=0.03$}
            
            \addplot[matlab3, style= thick] table [x = {n}, y = {err}] {\datamthree};
            \addlegendentry{$\lambda=10^{-3}$}

            \addplot[matlab5, style= thick] table [x = {n}, y = {err}] {\datamfive};
            \addlegendentry{$\lambda=10^{-5}$}

        \end{axis}
    \end{tikzpicture}
    \subcaption{$2\times 3$ thermal block, POD variant.}
\end{subfigure}
    \caption{Error decay in $\|\cdot\|_{1;\Omega}$ for different POD parameters.}
    \label{fig:err_decay_pod}
\end{figure}

We summarize some observations in Figures \ref{fig:err_decay} and \ref{fig:err_decay_pod}:
\begin{itemize}
	\item The results for the bulk criterion and the POD variant are very similar. 
	\item The \emph{asymptotic} decay of the batch greedy is very similar to the one of the classical weak greedy (CWG) for all choices of $\lambda$. This confirms our theoretical findings, namely that the asymptotic rate of the greedy algorithm is preserved.
	\item For small values of $\lambda$ we observe \enquote{plateaus}, where the error stagnates with increasing $n$. 
	\item The error is not strictly increasing for increasing values of $\lambda$.
	\item For the $2 \times 2$ thermal block and the bulk criterion batch greedy, the error decay for CWG and $\lambda=1$ coincide. For the POD variant and the $2 \times 3$ block, the decays are very similar. 
	\item The error decay is less uniform for the $2\times 2$ block and small values of $\lambda$ as compared to the $2\times 3$ block. We expect that this effect is more pronounced for more challenging problems (here, larger values of $p_x$ and $p_y$).
\end{itemize}

\subsection{Batch selection}
As mentioned in \S\ref{Sec:bminusone}, the sample values determining the snapshots in the batch can be varied. As long as the first snapshots is chosen by maximizing the error estimate, our above analysis remains valid. Hence, we compare four variants: (1) the CWG, (2) maximizing the error estimator $b$ times, (3) the first one corresponding to maximizing the error estimate and choosing the remaining $b-1$ at random and (4) choose all $b$ at random (where our analysis fails). The results are shown in Figure \ref{fig:err_decay_random} for the $2\times 3$ thermal block with the bulk variant (top, Fig.\ \ref{fig:err_decay_23_bulk_random}) and the POD version (bottom, Fig.\ \ref{fig:err_decay_23_pod_random}), where we depict the error versus the size of the reduced basis. 
We observe asymptotically very similar results. Only the fully random variant seems to be a bit worse. It seems that at least for the test problems considered here, the choice of the batch samples does not have a strong influence on the greedy convergence rate.
\begin{figure}[!htb]
\pgfplotstableread{Data/2x3/randombatch_lambda0.01_bulk.data}{\data}
\begin{subfigure}[c]{.9\textwidth}
    \centering
    \begin{tikzpicture}
        \begin{axis}[
            ymode=log,
            xmin = 0, xmax = 120,
            ytick = {1e0, 1e-3, 1e-6, 1e-9, 1e-12},
            grid = both,
            minor tick num = 1,
            major grid style = {lightgray},
            minor grid style = {lightgray!25},
            width = 0.9\textwidth,
            height = 0.45\textwidth,
            legend cell align = {left},
            legend pos = north east,
            xlabel = {size $n$ of the reduced basis},
            ylabel = {$\epsilon_{\max,\text{rel}, |\cdot|_{1;\Omega}}(n)$}
        ]
            \addplot[mark=none, gray, samples=2,domain=0:150, style= dashed, forget plot,thick]
            {1e-5}
            node[pos=0.07, above]{$\epsilon_{tol}$};

            \addplot[matlab4, style= thick] table [x = {n}, y = {cwg}] {\data};
            \addlegendentry{CWG}
            
            \addplot[matlab9, style= thick] table [x = {n}, y = {standard}] {\data};
            \addlegendentry{$b$ max}
            
            \addplot[matlab2, style= thick] table [x = {n}, y = {random}] {\data};
            \addlegendentry{random $b-1$}
            
            \addplot[matlab3, style= thick] table [x = {n}, y = {allrandom}] {\data};
            \addlegendentry{allrandom}
            
        \end{axis}
    \end{tikzpicture}
    \subcaption{bulk variant}
    \label{fig:err_decay_23_bulk_random}
\end{subfigure}
    \pgfplotstableread{Data/2x3/randombatch_lambda0.01_pod.data}{\data}
\begin{subfigure}[c]{.9\textwidth}
	\centering    
    \begin{tikzpicture}
        \begin{axis}[
            ymode=log,
            xmin = 0, xmax = 120,
            ytick = {1e2, 1e0, 1e-2, 1e-4, 1e-6},
            grid = both,
            minor tick num = 1,
            major grid style = {lightgray},
            minor grid style = {lightgray!25},
            width = 0.9\textwidth,
            height = 0.45\textwidth,
            legend cell align = {left},
            legend pos = north east,
            legend columns = 1,
            xlabel = {size $n$ of the reduced basis},
            ylabel = {$\epsilon_{\max,\text{rel}, |\cdot|_{1;\Omega}}(n)$}
        ]
            \addplot[mark=none, gray, samples=2,domain=0:150, style= dashed, forget plot]
            {1e-5}
            node[pos=0.07, above]{$\epsilon_{tol}$};

            \addplot[matlab4, style= thick] table [x = {n}, y = {cwg}] {\data};
            \addlegendentry{CWG}
            
            \addplot[matlab9, style= thick] table [x = {n}, y = {standard}] {\data};
            \addlegendentry{$b$ max}
            
            \addplot[matlab2, style= thick] table [x = {n}, y = {random}] {\data};
            \addlegendentry{random $b-1$}
            
            \addplot[matlab3, style= thick] table [x = {n}, y = {allrandom}] {\data};
            \addlegendentry{allrandom}
        \end{axis}
    \end{tikzpicture}
    \subcaption{POD variant}
    \label{fig:err_decay_23_pod_random}
\end{subfigure}
    \caption{Error decay in $\|\cdot\|_{1;\Omega}$ for different batch selection mechanisms in comparison with the classical weak greedy (CWG) method for the $2 \times 3$ thermal block with $\lambda=0.01$.}
    \label{fig:err_decay_random}
\end{figure}

\subsection{Dimension of the reduced systems}
In the next experiment, we fix the target relative training tolerance as $10^{-5}$ and compare the \enquote{final dimension} of the reduced system (to achieve the tolerance) determined by the batch greedy method for different values of $\lambda$. In order to compare with the CWG, we display the \emph{ratio} of the dimension of the reduced system by the batch and classical weak greedy. Moreover, we monitor the \enquote{effective batch size}, by which we denote the average number of snapshots that are added to the reduced basis in each iteration (hence, it can take any value between 1 and the used batch size, here $b=30$). The results are depicted in Figure \ref{fig:basis_batch}, both for the bulk criterion and POD variant.

We observe that the final dimension (graph on the top) of the reduced system increases as $\lambda$ decreases (which means that online computing times and storage demands will grow), but the growth is moderate for reasonable choices of $\lambda$. The effective batch size (graph on the bottom) also increases for decreasing $\lambda$, which is to be expected. For the $2 \times 2$ block, up to $30\%$ of the batch elements are added to the reduced basis, whereas roughly $50\%$ are used for the $2 \times 3$ thermal block. Again, we conclude that tougher problems can potentially benefit from the batch version.
\begin{figure}[!htb]
    \pgfplotstableread{Data/2x2/thermalblock_lambda_overall.data}{\datatwotwo}
    \pgfplotstableread{Data/2x3/thermalblock_lambda_overall.data}{\datatwothree}
    \pgfplotstableread{Data/2x2/thermalblock_pod_lambda_overall.data}{\datatwotwopod}
    \pgfplotstableread{Data/2x3/thermalblock_pod_lambda_overall.data}{\datatwothreepod}
    \begin{subfigure}[c]{\textwidth}
    \centering
    \begin{tikzpicture}
        \begin{axis}[
            xmode=log,
            xmin = 1e-5, xmax = 1,
            grid = both,
            minor tick num = 1,
            major grid style = {lightgray},
            minor grid style = {lightgray!25},
            width = 0.9\textwidth,
            height = 0.45\textwidth,
            legend cell align = {left},
            legend pos = {north east},
            xlabel = {parameter $\lambda$ (bulk/POD)},
            ylabel = {final basis size ratio}
        ]
            \addplot[matlab4, mark = oplus, style= very thick, mark size = 2pt,
             y filter/.expression={\thisrow{lambda}==2.0 ? nan : y},
             unbounded coords=jump] table [x = {lambda}, y = {rel_size}] {\datatwotwo};

            \addplot[matlab9, mark = triangle, style= very thick, mark size = 2pt,
             y filter/.expression={\thisrow{lambda}==2.0 ? nan : y},
             unbounded coords=jump] table [x = {lambda}, y = {rel_size}] {\datatwothree};
    
			\addplot[matlab2, mark = oplus, style= very thick, mark size = 2pt,
             y filter/.expression={\thisrow{lambda}==2.0 ? nan : y},
             unbounded coords=jump] table [x = {lambda}, y = {rel_size}] {\datatwotwopod};

            \addplot[matlab3, mark = triangle, style= very thick, mark size = 2pt,
             y filter/.expression={\thisrow{lambda}==2.0 ? nan : y},
             unbounded coords=jump] table [x = {lambda}, y = {rel_size}] {\datatwothreepod};

            \legend{
                $2\times 2$ (bulk), $2\times 3$ (bulk),
                $2\times 2$ (POD), $2\times 3$ (POD)
            }
        \end{axis}
    \end{tikzpicture}
\end{subfigure}
 \begin{subfigure}[c]{\textwidth}
 	\centering
    \begin{tikzpicture}
        \begin{axis}[
            xmode=log,
            xmin = 1e-5, xmax = 1,
            ymin = 1, ymax = 15,
            grid = both,
            minor tick num = 1,
            major grid style = {lightgray},
            minor grid style = {lightgray!25},
            width = 0.9\textwidth,
            height = 0.45\textwidth,
            legend cell align = {left},
            legend pos = {north east},
            xlabel = {parameter $\lambda$ (bulk/POD)},
            ylabel = {effective batch size}
        ]
            \addplot[matlab4, mark = oplus, style= very thick, mark size = 2pt,
             y filter/.expression={\thisrow{lambda}==2.0 ? nan : y},
             unbounded coords=jump] table [x = {lambda}, y = {eff_bs}] {\datatwotwo};
            
            \addplot[matlab9, mark = triangle, style= very thick, mark size = 2pt,
             y filter/.expression={\thisrow{lambda}==2.0 ? nan : y},
             unbounded coords=jump] table [x = {lambda}, y = {eff_bs}] {\datatwothree};
            
              \addplot[matlab2, mark = oplus, style= very thick, mark size = 2pt,
             y filter/.expression={\thisrow{lambda}==2.0 ? nan : y},
             unbounded coords=jump] table [x = {lambda}, y = {eff_bs}] {\datatwotwopod};
            
            \addplot[matlab3, mark = triangle, style= very thick, mark size = 2pt,
             y filter/.expression={\thisrow{lambda}==2.0 ? nan : y},
             unbounded coords=jump] table [x = {lambda}, y = {eff_bs}] {\datatwothreepod};

            \legend{
                 $2\times 2$ (bulk), $2\times 3$ (bulk),
                 $2\times 2$ (POD), $2\times 3$ (POD)
            }
        \end{axis}
    \end{tikzpicture}
    \end{subfigure}
    \caption{Relative change in the final basis size (compared to the classical weak greedy) and the effective batch size for different bulk parameters.}\label{fig:basis_batch}
\end{figure}

\subsection{Offline \& online computing times}
Our next aim is the investigation of the computational times since the ultimate aim of the introduction of a batch is to reduce computing times in particular in the offline stage.

In Figure \ref{fig:times}, we show computing times (CPU) for the offline and the online phase (for both variants, bulk criterion and POD). Again, we normalize the CPU times w.r.t.\ the classical greedy, namely, we indicate the ratio of the CPU time for a given $\lambda$ and the time for the standard greedy. For determining the online time, we calculated the average computing time of a reduced solution for $500$ randomly chosen parameters. Again, we show results for the bulk criterion and the POD version.

We note that the \emph{online} CPU times resemble the basis size shown in Figure~\ref{fig:basis_batch}, which is to be expected. Moreover, the online times for the two variants are very close, which is also no surprise as the online CPU time is mainly determined by the size of the reduced model. Finally, we observe a quite moderate increase of the online time due to the introduction of a batch.

Concerning the \emph{offline} CPU times, we clearly see the positive impact by using parallel batch computations. For the $2\times 2$ block, the offline time decreases with decreasing $\lambda$. However, from a certain value on (here around $10^{-2}$), a further reduction of $\lambda$ leads to no further reduction in the offline time. With an optimal choice of $\lambda$, the offline CPU time is reduced to less than half of the classical approach. A similar observation is made for the POD variant for the $2\times 3$ block. For the bulk criterion case, though, the situation is not so clear. We do see only a very moderate reduction of the offline CPU times for most values of $\lambda$, which is counterintuitive at a first glance. To this end, we will investigate the split-up of the CPU time in more detail next.

\begin{figure}[!htb]
    \pgfplotstableread{Data/2x2/thermalblock_lambda_overall.data}{\data}
    \pgfplotstableread{Data/2x2/thermalblock_pod_lambda_overall.data}{\datapod}
\begin{subfigure}[c]{\textwidth}
	\centering
    \begin{tikzpicture}
        \begin{axis}[
            xmode=log,
            xmin = 1e-5, xmax = 1,
            grid = both,
            minor tick num = 1,
            major grid style = {lightgray},
            minor grid style = {lightgray!25},
            ytick = {0.25, 0.5, 0.75, 1, 1.25},
            width = 0.9\textwidth,
            height = 0.45\textwidth,
            legend cell align = {left},
            legend style={at={(0.05, 0.7)},anchor= north west},
            xlabel = {parameter $\lambda$ (bulk/POD)},
            ylabel = {CPU ratio}
        ]
            \addplot[matlab4, mark = *, style= very thick, mark size = 2pt,
            y filter/.expression={\thisrow{lambda}==2.0 ? nan : y}] table [x = {lambda}, y = {t_online_n}] {\data};
            \addlegendentry{online (bulk)}
            \addplot[matlab9, mark = triangle, style= very thick, mark size = 2pt,
            y filter/.expression={\thisrow{lambda}==2.0 ? nan : y}] table [x = {lambda}, y = {t_offline_n}] {\data};
            \addlegendentry{offline (bulk)}
            \addplot[matlab2, mark = *, style= very thick, mark size = 2pt,
            y filter/.expression={\thisrow{lambda}==2.0 ? nan : y}] table [x = {lambda}, y = {t_online_n}] {\datapod};
            \addlegendentry{online (POD)}
            \addplot[matlab3, mark = triangle, style= very thick, mark size = 2pt,
            y filter/.expression={\thisrow{lambda}==2.0 ? nan : y}] table [x = {lambda}, y = {t_offline_n}] {\datapod};
            \addlegendentry{offline (POD)}
            \addplot[mark=none, black, samples=2,domain=1e-6:1e3, style=dashed] {1};

            \addplot[mark=none, black, samples=2,domain=1e-6:1e3, style=dashed] {1};
        \end{axis}
    \end{tikzpicture}
    \caption{$2\times 2$ thermal block} \label{fig:times22}
\end{subfigure}
    \pgfplotstableread{Data/2x3/thermalblock_lambda_overall.data}{\data}
    \pgfplotstableread{Data/2x3/thermalblock_pod_lambda_overall.data}{\datapod}
\begin{subfigure}[c]{\textwidth}
	\centering
    \begin{tikzpicture}
        \begin{axis}[
            xmode=log,
            xmin = 1e-5, xmax = 1,
            ymax = 1.5,
            grid = both,
            minor tick num = 1,
            ytick = {0.5, 0.75, 1, 1.25, 1.5},
            major grid style = {lightgray},
            minor grid style = {lightgray!25},
            width = 0.9\textwidth,
            height = 0.45\textwidth,
            legend cell align = {left},
            legend pos = south west,
            xlabel = {parameter $\lambda$ (bulk/POD)},
            ylabel = {CPU ratio}
        ]
            \addplot[matlab4, mark = *, style= very thick, mark size = 2pt,
            y filter/.expression={\thisrow{lambda}==2.0 ? nan : y}] table [x = {lambda}, y = {t_online_n}] {\data};
            \addlegendentry{online (bulk)}
            \addplot[matlab9, mark = triangle, style= very thick, mark size = 2pt,
            y filter/.expression={\thisrow{lambda}==2.0 ? nan : y}] table [x = {lambda}, y = {t_offline_n}] {\data};
            \addlegendentry{offline (bulk)}
            \addplot[matlab2, mark = *, style= very thick, mark size = 2pt,
            y filter/.expression={\thisrow{lambda}==2.0 ? nan : y}] table [x = {lambda}, y = {t_online_n}] {\datapod};
            \addlegendentry{online (POD)}
            \addplot[matlab3, mark = triangle, style= very thick, mark size = 2pt,
            y filter/.expression={\thisrow{lambda}==2.0 ? nan : y}] table [x = {lambda}, y = {t_offline_n}] {\datapod};
            \addlegendentry{offline (POD)}
            \addplot[mark=none, black, samples=2,domain=1e-6:1e2, style=dashed] {1};

            \addplot[mark=none, black, samples=2,domain=1e-6:1e2, style=dashed] {1};
        \end{axis}
    \end{tikzpicture}
    \caption{$2\times 3$ thermal block} \label{fig:times33}
    \end{subfigure}
    \caption{CPU time ratio of the batch versus the classical greedy method for different parameters $\lambda$ (bulk, POD), $2\times 2$ (top) and $2\times 3$ thermal block.} \label{fig:times}
\end{figure}

\subsection{CPU times in detail} 
As announced, we investigate the offline CPU times a bit more in detail and show a split-up of the offline phase into the parts, namely
\begin{itemize}
\item \emph{Solve}: using the full order model for the snapshot computation;
\item \emph{Evaluate}: evaluation of the error estimator on the training set;
\item \emph{Extend}: extension of the reduced basis including the orthogonalization;
\item \emph{Reduce}: Update of the reduced model, error estimator and projectors.
\end{itemize}
The results are shown in Figure \ref{fig:split}, for both variants (POD is shown in transparent mode). 
Clearly, the parallel computation reduces the \emph{Solve} part as $\lambda$ decreases. For the $2\times 2$ block case, also the overall offline time is significantly reduced. The same holds true for the $2\times 3$ block using the POD variant. The situation is different for the bulk criterion variant. As we see, the time shares for \emph{Evaluate} and \emph{Reduce} are significant. This comes from the fact that adding one snapshot at a time by checking the bulk criterion requires to update the reduced model. The POD variant obviously performs much better. 
For the special case $\lambda=0$ in the bulk version, we add all snapshots at once and do not need to update the reduced model after every snapshot (rather at the end of the iteration). This should reduce the shares of \emph{Evaluate} and \emph{Reduce}. However, even though we used a smaller batch size $b=10$ (instead of $b=30$) for this special case, this advantage seems to be very small due to the poor convergence. For $\lambda=0$ and $b=30$ (not displayed here) the offline times for both test problems were significantly higher than the offline times of the CWG.

\begin{figure}[!htb]
    \pgfplotstableread{Data/2x2/thermalblock_lambda_overall.data}{\data}
    \pgfplotstableread{Data/2x2/thermalblock_pod_lambda_overall.data}{\datapod}
\begin{subfigure}[c]{\textwidth}
	\centering
    \begin{tikzpicture}
        \begin{axis}[
            ybar stacked,
            xmin = -0.75, xmax = 12.75,
            ymin = 0,
            ymajorgrids = true,
            xmajorgrids = false,
            width = 0.9\textwidth,
            height = 0.5\textwidth,
            legend cell align = {left},
            legend pos = north west,
            legend columns = 2,
            xlabel = {parameter $\lambda$ (left: bulk, right: POD)},
            ylabel = {offline time $[s]$},
            xtick=data,     
            xticklabels = {0, 1e-5, 3e-5, 1e-4, 3e-4, 1e-3, 3e-3, 0.01, 0.03, 0.1, 0.3, 1, CWG},
            scale ticks above exponent=2,
        ]
        \addplot[fill=matlab4,bar width=0.3] table [y = t_solve, x expr=\coordindex-0.2] {\data};
        \addplot[fill=matlab9,bar width=0.3] table [y = t_evaluate, x expr=\coordindex-0.2] {\data};
        \addplot[fill=matlab2,bar width=0.3] table [y = t_extend, x expr=\coordindex-0.2] {\data};
        \addplot[fill=matlab3,bar width=0.3] table [y = t_reduce, x expr=\coordindex-0.2] {\data};
        \addplot[fill=mygray,bar width=0.3] table [y = t_other, x expr=\coordindex-0.2] {\data};
        
        \legend{
            Solve, Evaluate, Extend, Reduce, Other
        }
        \end{axis}
      \begin{axis}[
            ybar stacked,
            xmin = -0.75, xmax = 12.75,
            ymin = 0,
            width = 0.9\textwidth,
            height = 0.5\textwidth,
            ticks=none,
        ]
        \addplot[fill=matlab4,semitransparent, bar width=0.3] table [y = t_solve, x expr=\coordindex+0.2] {\datapod};
        \addplot[fill=matlab9,semitransparent,bar width=0.3] table [y = t_evaluate, x expr=\coordindex+0.2] {\datapod};
        \addplot[fill=matlab2,semitransparent,bar width=0.3] table [y = t_extend, x expr=\coordindex+0.2] {\datapod};
        \addplot[fill=matlab3,semitransparent,bar width=0.3] table [y = t_reduce, x expr=\coordindex+0.2] {\datapod};
        \addplot[fill=mygray,semitransparent,bar width=0.3] table [y expr= \thisrow{t_other}+\thisrow{t_pod}, x expr=\coordindex+0.2] {\datapod};
        
        \end{axis}
    \end{tikzpicture}
    \caption{$2 \times 2$ thermal block}\label{fig:split22}
\end{subfigure}
\pgfplotstableread{Data/2x3/thermalblock_lambda_overall.data}{\data}
\pgfplotstableread{Data/2x3/thermalblock_pod_lambda_overall.data}{\datapod}

\begin{subfigure}[c]{\textwidth}
	\centering
    \begin{tikzpicture}
        \begin{axis}[
            ybar stacked,
            xmin = -0.75, xmax = 12.75,
            ymin = 0,
            ymajorgrids = true,
            xmajorgrids = false,
            width = 0.9\textwidth,
            height = 0.5\textwidth,
            legend cell align = {left},
            legend style={at={(0.6, 0.97)},anchor= north},
            legend columns = 2,
            xlabel = {parameter $\lambda$ (left: bulk, right: POD)},
            ylabel = {offline time $[s]$},
            xtick=data,     
            xticklabels = {0, 1e-5, 3e-5, 1e-4, 3e-4, 1e-3, 3e-3, 0.01, 0.03, 0.1, 0.3, 1, CWG},
        ]
        \addplot[fill=matlab4,bar width=0.3] table [y = t_solve, x expr=\coordindex-0.2] {\data};
        \addplot[fill=matlab9,bar width=0.3] table [y = t_evaluate, x expr=\coordindex-0.2] {\data};
        \addplot[fill=matlab2,bar width=0.3] table [y = t_extend, x expr=\coordindex-0.2] {\data};
        \addplot[fill=matlab3,bar width=0.3] table [y = t_reduce, x expr=\coordindex-0.2] {\data};
        \addplot[fill=mygray,bar width=0.3] table [y = t_other, x expr=\coordindex-0.2] {\data};
            
        \legend{
            Solve, Evaluate, Extend, Reduce, Other
        }
        \end{axis}
              \begin{axis}[
            ybar stacked,
            xmin = -0.75, xmax = 12.75,
            ymin = 0,
            width = 0.9\textwidth,
            height = 0.5\textwidth,
            ticks=none,
        ]
        \addplot[fill=matlab4,semitransparent, bar width=0.3] table [y = t_solve, x expr=\coordindex+0.2] {\datapod};
        \addplot[fill=matlab9,semitransparent,bar width=0.3] table [y = t_evaluate, x expr=\coordindex+0.2] {\datapod};
        \addplot[fill=matlab2,semitransparent,bar width=0.3] table [y = t_extend, x expr=\coordindex+0.2] {\datapod};
        \addplot[fill=matlab3,semitransparent,bar width=0.3] table [y = t_reduce, x expr=\coordindex+0.2] {\datapod};
        \addplot[fill=mygray,semitransparent,bar width=0.3] table [y expr= \thisrow{t_other}+\thisrow{t_pod}, x expr=\coordindex+0.2] {\datapod};
        
        \end{axis}

    \end{tikzpicture}
    \caption{$2 \times 3$ thermal block}\label{fig:split33}
   \end{subfigure}
    \caption{Offline computing times for different parameters (CWG: classical weak greedy).}\label{fig:split}
\end{figure}

\subsection{The break-even point}
Finally, we consider the influence of the batch to the \enquote{break-even point}, which determines the number of parameter-queries from which a reduced simulation (in\-clud\-ing offline and online computing times) pays off as compared to a repeated call of the full order model. To be precise, the break-even point is $k^*= \left\lceil t_{\text{offline}}/(t_{\text{full}} - t_{\text{online}}) \right\rceil$,  where $t_{\text{offline}}$ denotes the offline time, $t_{\text{full}}$ the time to calculate a full order solution (also for the snapshots), and $t_{\text{online}}$ is the time it takes to compute a reduced solution. In Table \ref{tab:break_even}, we see that for the parallel batch greedy algorithm the break-even points are substantially lower than for the classical variant, which supports the argument that the trade-off between increased online and decreased offline time can be worthwhile. We also see that the numbers are quite similar no matter how the samples for the batch are chosen.
\begin{table}[!htb]
\centering \footnotesize
\begin{tabular}{|c||c|c|c|c|c||c|c|c|c|c|}
\hline
& \multicolumn{5}{c||}{$2 \times 2$} & \multicolumn{5}{c|}{$2 \times 3$}\\ \cline{2-11}
($t$ in $\text{sec}$) & CWG & \multicolumn{2}{c|}{bulk} & \multicolumn{2}{c||}{POD} & CWG & \multicolumn{2}{c|}{bulk} & \multicolumn{2}{c|}{POD}\\
\hline
\multirow{2}*{select.} & \multirow{2}*{--} & \multirow{2}*{batch} & rand. & \multirow{2}*{batch} & rand. & \multirow{2}*{--} & \multirow{2}*{batch} & rand. & \multirow{2}*{batch} & rand.\\
 &  &  & $b-1$ & & $b-1$ & & & $b-1$ & & $b-1$\\
\hline & & & & & & & & & &\\[-1em]
$\lambda$ & -- & $10^{-3}$ & $10^{-3}$ & $3 \cdot 10^{-5}$ & $10^{-3}$ & -- & $10^{-2}$ & $10^{-2}$ & $ 10^{-4}$ & $10^{-2}$\\
\hline
$t_{\text{full}}$ & 77.38 & 77.38 & 77.38 & 77.38 & 77.38 & 77.38 & 77.38 & 77.38 & 77.38 & 77.38\\
$t_{\text{offline}}$ & 3163 & 1131 & 1028 & 928 & 941 & 18612 & 11631 & 12154 & 3313 & 2580\\
$t_{\text{online}}$ & 0.0342 & 0.0372 & 0.0369 & 0.0366 & 0.0394 & 0.136 & 0.141 & 0.148 & 0.146 & 0.148\\
\hline
$k^* $ & 41 & 15 & 14 & 12 & 13 & 241 & 151 & 158 & 43 & 34\\
\hline
\end{tabular}
\caption{Break-even points $k^*$ for the reduced models of the classical greedy compared to the two variants of the batch greedy algorithm for some values of $\lambda$.}\label{tab:break_even}
\end{table}

\begin{remark}
	Our above results indicate that it might be interesting to a priori determine an optimal batch size $b$ and bulk parameter $\lambda$ in order to balance offline and online times for a given problem. However, we believe that the choice of batch size should be motivated more by the hardware to be used in order to maximize the usage of all available CPU cores. In most cases it seems meaningful to choose the batch size equal to the number of parallel processes via \emph{MPI}. This is under the assumption $\lambda>0$ (for either variant) and the number of available CPU cores is low double digits. For $\lambda=0$, a more conservative $b\le 10$ seems to work best.

	The optimal choice of $\lambda$ is more challenging. The results presented in this work (and further investigations not presented here) suggest that $\lambda \in [10^{-2},10^{-1}]$ seem to work quite well in all meaningful cases for the bulk version. For the POD version, one can go even smaller with $\lambda \in [10^{-4},10^{-2}]$. A more thorough investigation of an optimal or even adaptive choice for $\lambda$ (and $b$) is a topic for future research.
\hfill$\diamond$
\end{remark}

\subsection{Influence of the training set size}
Finally, we investigate the influence of the size of the training set. While the error analysis in \S \ref{Sec:Analysis} is based upon a possibly \emph{infinite} set $\cF$, a \emph{finite} training set is used in practice. However, a larger training set also contains an increasing number of snapshots that are quite similar. Hence, one could expect that increasingly many snapshots would have to be rejected as the training set is refined.

In order to address this concern, we consider again the $2\times 2$ thermal block and use uniformly refined training sets of size $5^4$, $10^4$, $15^4$, $20^4$ and $25^4$. The resulting effective batch sizes are shown in Figure \ref{fig:snap_eff_bs} for both variants. In fact, we see that the effective batch size is smaller for larger training sets, but the effect is rather moderate.

\begin{figure}[!htb]
    \pgfplotstableread{Data/2x2/thermalblock_snap_lambda1.0_overall.data}{\datalambdazero}
    \pgfplotstableread{Data/2x2/thermalblock_snap_lambda0.1_overall.data}{\datalambdaone}
    \pgfplotstableread{Data/2x2/thermalblock_snap_lambda0.01_overall.data}{\datalambdatwo}
    \pgfplotstableread{Data/2x2/thermalblock_snap_lambda0.001_overall.data}{\datalambdathree}
    \pgfplotstableread{Data/2x2/thermalblock_snap_lambda0.0001_overall.data}{\datalambdafour}
    \pgfplotstableread{Data/2x2/thermalblock_snap_lambda1e-05_overall.data}{\datalambdafive}
    \begin{subfigure}[c]{.47\textwidth}
    \centering
    \begin{tikzpicture}
        \begin{axis}[
            ymin = 0, ymax = 15,
            grid = both,
            minor tick num = 1,
            major grid style = {lightgray},
            minor grid style = {lightgray!25},
            width = 0.9\textwidth,
            height = \textwidth,
            legend cell align = {left},
            legend pos = {outer north east},
            xtick={5,10,15,20,25},
            xlabel = {\# of samples per dim.},
            ylabel = {effective batch size}
        ]
            \addplot[matlab5, mark = |, style= very thick, mark size = 2pt] 
            table [x = {snap}, y = {eff_bs}] {\datalambdafive};

            \addplot[matlab1, mark = o, style= very thick, mark size = 2pt] 
            table [x = {snap}, y = {eff_bs}] {\datalambdafour};

            \addplot[matlab3, mark = diamond, style= very thick, mark size = 2pt] 
            table [x = {snap}, y = {eff_bs}] {\datalambdathree};

            \addplot[matlab2, mark = square, style= very thick, mark size = 2pt] 
            table [x = {snap}, y = {eff_bs}] {\datalambdatwo};

            \addplot[matlab9, mark = triangle, style= very thick, mark size = 2pt] 
            table [x = {snap}, y = {eff_bs}] {\datalambdaone};

            \addplot[matlab4, mark = oplus, style= very thick, mark size = 2pt] 
            table [x = {snap}, y = {eff_bs}] {\datalambdazero};

            \legend{
                $\lambda=10^{-5}$,$\lambda=10^{-4}$,$\lambda=10^{-3}$,
                $\lambda=10^{-2}$,$\lambda=10^{-1}$,$\lambda=1$,
            }
        \end{axis}
    \end{tikzpicture}
    \caption{bulk variant.} 
\end{subfigure}
\hspace*{.01cm}
\pgfplotstableread{Data/2x2/thermalblock_pod_snap_lambda1.0_overall.data}{\datalambdazero}
\pgfplotstableread{Data/2x2/thermalblock_pod_snap_lambda0.1_overall.data}{\datalambdaone}
\pgfplotstableread{Data/2x2/thermalblock_pod_snap_lambda0.01_overall.data}{\datalambdatwo}
\pgfplotstableread{Data/2x2/thermalblock_pod_snap_lambda0.001_overall.data}{\datalambdathree}
\pgfplotstableread{Data/2x2/thermalblock_pod_snap_lambda0.0001_overall.data}{\datalambdafour}
\pgfplotstableread{Data/2x2/thermalblock_pod_snap_lambda1e-05_overall.data}{\datalambdafive}
\begin{subfigure}[c]{.47\textwidth}
    \centering
    \begin{tikzpicture}
        \begin{axis}[
            ymin = 0, ymax = 15,
            grid = both,
            minor tick num = 1,
            major grid style = {lightgray},
            minor grid style = {lightgray!25},
            width = 0.9\textwidth,
            height = \textwidth,
            legend cell align = {left},
            legend pos = {north east},
            legend columns = 2,
            xtick={5,10,15,20,25},
            xlabel = {\# of samples per dim.},
            yticklabel pos=right
        ]
            \addplot[matlab4, mark = oplus, style= very thick, mark size = 2pt] 
            table [x = {snap}, y = {eff_bs}] {\datalambdazero};

            \addplot[matlab9, mark = triangle, style= very thick, mark size = 2pt] 
            table [x = {snap}, y = {eff_bs}] {\datalambdaone};
    
            \addplot[matlab2, mark = square, style= very thick, mark size = 2pt] 
            table [x = {snap}, y = {eff_bs}] {\datalambdatwo};

            \addplot[matlab3, mark = diamond, style= very thick, mark size = 2pt] 
            table [x = {snap}, y = {eff_bs}] {\datalambdathree};

            \addplot[matlab1, mark = o, style= very thick, mark size = 2pt] 
            table [x = {snap}, y = {eff_bs}] {\datalambdafour};

            \addplot[matlab5, mark = |, style= very thick, mark size = 2pt] 
            table [x = {snap}, y = {eff_bs}] {\datalambdafive};

        \end{axis}
    \end{tikzpicture}
    \caption{POD variant} 
\end{subfigure}
\caption{Effective batch size for different values of $\lambda$ depending on the number of samples per dimension in the parameter space $\cP$.}\label{fig:snap_eff_bs}
\end{figure}

\subsection{Discussion}
We summarize the findings of our numerical experiments. We have shown the potential of the batch greedy method to speedup the offline phase. While the asymptotic rate of convergence of the classical weak greedy method is preserved in many cases, we observed a speedup of the offline phase by a factor of $30\%$, which is expected to be even more pronounced for more challenging problems. In our experiments, the POD variant turned out to be more efficient compared to the bulk criterion. The selection of the batch as well as the size of the training set seem to have only minor influence on the performance of the method. The parameter $\lambda$ has significant influence on the efficiency and must be chosen problem-dependent. 

\appendix
\section{Error analysis in Banach spaces}
\label{Sec:Appendix_Banach}

We briefly report on the generalization of the results in \S \ref{Sec:Analysis} for Algorithm \ref{alg:wbg} to Banach spaces analogous to \cite[\S 4]{weakgreedy2}. As in the Hilbert space case this amounts minor adaptions, which are highlighted in \hue{blue}. 

Again, we will view the results of the weak batch greedy method as a lower triangular matrix $A := (a_{i,j})_{i,j\in\N_0}$ (see \eqref{eq:A}) to make use of Lemma \ref{lemma:main}. Therefore, we will use the abbreviations $\sigma_n :=  \sigma_n (\cF)_X$ and $d_n := d_n(\cF)_X$ as before, where $X$ is now a Banach space with norm $\norm{\cdot} \equiv \norm{\cdot}_X$. For each $j=0,1,\ldots$ we denote by $\phi_j \in X^*$ the linear functional of norm one that satisfies
\begin{align*}
    \text{(i)} \; \phi_j (V_j)=0, \qquad 
    \text{(ii)} \; \phi_j(f_j) = \dist(f_j, V_j)_X,
\end{align*}
where the existence of such $\phi_j$ is a consequence of the Hahn-Banach theorem (see \cite[Cor.\ IV.14.13]{hewitt2012}). The entries of $A$ are then given by
\begin{align}
\label{eq:entries_A_B}
    a_{i,j} := \phi_j(f_i).
\end{align}
We have similar properties of $A$ as before.

\begin{lemma}\label{La:Psbg_B}
	For Algorithm \ref{alg:wbg} and $A$ in \eqref{eq:entries_A_B}, we have for all $n,m\in\N_0$ with $m>n$  
\begin{enumerate}[\bfseries ({P}1)]
    \item the diagonal elements satisfy $\hue{\lambda} \gamma \sigma_{n} \leq \abs{a_{n,n}} \,\leq\, \sigma_n$;
    \item the entries in the lower triangular half satisfy $\abs{a_{m,n}} \leq \sigma_n$.
\end{enumerate}
\end{lemma}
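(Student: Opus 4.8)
The plan is to follow the proof of Lemma~\ref{La:Psbg} line by line, replacing every appeal to orthogonality and to the projection $P_n$ by the defining properties of the Hahn--Banach functionals $\lambda_j$ from \eqref{eq:entries_A_B}. The one identity that powered the Hilbert-space argument, $\abs{a_{n,n}} = \sigma_n(f_n)$, survives unchanged: by property~(ii) of $\lambda_n$ we have $a_{n,n} = \lambda_n(f_n) = \dist(f_n,V_n)_X = \sigma_n(f_n)$, so the roles previously played by the inner product with $f^*_n$ are now played by evaluation against $\lambda_n$.

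For the upper bound in \textbf{(P1)} this identity gives immediately $\abs{a_{n,n}} = \sigma_n(f_n) \leq \max_{f\in\cF}\dist(f,V_n)_X = \sigma_n$, since $f_n\in\cF$. For the lower bound I would reuse verbatim the batch bookkeeping of Lemma~\ref{La:Psbg}: decompose $n = b\ell + k$, set $\undertilde{n}=b\ell$ and $\widetilde{n}=b(\ell+1)-1$, and exploit that the first element of each batch ($f_{\undertilde{n}}$ and $f_{\widetilde{n}+1}$) is selected by the classical weak criterion of line~\ref{line:eq:batchfirst}, so that $\sigma_{\undertilde{n}}(f_{\undertilde{n}})\geq\gamma\sigma_{\undertilde{n}}$ and $\sigma_{\widetilde{n}+1}(f_{\widetilde{n}+1})\geq\gamma\sigma_{\widetilde{n}+1}$. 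Since this part of the argument manipulates only the scalar quantities $\sigma_\bullet$ and the selection inequalities, and never the geometry of $X$, it transfers from the Hilbert to the Banach setting without modification once the identity $\abs{a_{n,n}}=\sigma_n(f_n)$ is in hand; one again reaches $\gamma\sigma_{n+b-1}\leq\abs{a_{n,n}}$ using $\widetilde{n}\leq n+b-1$ and the monotonicity of $(\sigma_n)_{n\geq0}$.

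The genuinely new part is \textbf{(P2)}. In the Hilbert case the row estimate rested on the Pythagorean identity $\sum_{j=n}^m a_{m,j}^2 = \norm{f_m-P_nf_m}^2$, which has no counterpart without an inner product; instead I would bound each entry individually. Fix $m>n$. Property~(i) of $\lambda_n$ gives $\lambda_n(g)=0$ for all $g\in V_n$, hence $a_{m,n}=\lambda_n(f_m)=\lambda_n(f_m-g)$ for every $g\in V_n$; since $\norm{\lambda_n}_{X^*}=1$ this yields $\abs{a_{m,n}}\leq\norm{f_m-g}$. Taking the infimum over $g\in V_n$ produces $\abs{a_{m,n}}\leq\dist(f_m,V_n)_X=\sigma_n(f_m)\leq\sigma_n$, which is exactly the asserted estimate.

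I expect the main obstacle to be conceptual rather than computational: recognizing that the quadratic, whole-row bound of the Hilbert case is not available and that the weaker, entrywise estimate $\abs{a_{m,n}}\leq\sigma_n$ is the right substitute. The functionals $\lambda_j$ are engineered precisely so that this per-entry bound falls out at once from their unit norm together with their vanishing on $V_n$, making \textbf{(P2)} short. The only care needed is to confirm that this entrywise information is enough to drive the subsequent application of Lemma~\ref{lemma:main} in the Banach analog of Theorem~\ref{th:wbg_hilbert}, where passing from individual entries to the $\ell_2$-norms of the rows $\mathbf{g}_i$ is expected to cost only the constants already present in the Banach-space estimates.
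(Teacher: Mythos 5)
Your proposal is correct and follows essentially the same route as the paper's own proof: the upper bound in \textbf{(P1)} from the identity $\abs{a_{n,n}}=\lambda_n(f_n)=\sigma_n(f_n)$, the lower bound by reusing verbatim the batch bookkeeping of Lemma~\ref{La:Psbg}, and \textbf{(P2)} from $\norm{\lambda_n}_{X^*}=1$ together with $\lambda_n(V_n)=0$, followed by the infimum over $g\in V_n$. Your closing concern is also resolved exactly as you anticipate: in Theorem~\ref{th:wbg_banach} the paper sums the entrywise bounds, $\sum_{i=1}^K\norm{P\mathbf{g}_i}_{\ell_2}^2\leq K\sum_{i=1}^K\sigma_{n+i}^2$, paying only an extra factor of $K$ before invoking Lemma~\ref{lemma:main}.
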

\begin{proof}
The upper inequality of \emph{\textbf{(P1)}} is easy to see since $a_{n,n} = \dist(f_n,V_n)_X = \sigma_n(f_n) \hue{\leq} \max_{f \in \cF} \dist(f,V_n) = \sigma_n (\cF) = \sigma_n$. 
The lower inequality in \emph{\textbf{(P1)}} directly follows from Corollary \ref{col:algprop} \eqref{prop:batchquality} since $\hue{\lambda} \gamma \sigma_n \leq \sigma_n (f_n) = \abs{a_{n,n}}$. For \emph{\textbf{(P2)}} we have with $n<m$ the bound $\abs{a_{m,n}} = \phi_n(f_m) = \phi_n(f_m - g) \leq \norm{\phi_n}_{X^*} \norm{f_m - g} = \norm{f_m - g}$ for every $g \in V_n$ since $\phi_n(V_n)=0$. Therefore, $\abs{a_{m,n}} \leq \dist(f_m,V_n) = \sigma_n(f_m) \leq \sigma_n$, which concludes the proof.
\end{proof}

This allows us to generalize \cite[\S 4]{weakgreedy2} to the batch greedy algorithm with bulk criterion by substituting $\gamma$ with $\hue{\lambda} \gamma$. It is then straightforward to get the following result as an adaption of \cite[Cor.\ 4.2]{weakgreedy2}.

\begin{corollary}
    For Algorithm \ref{alg:wbg} in Banach spaces with constant $\gamma \in (0,1]$ and bulk parameter $\hue\lambda \in (0,1]$ we have:
    \begin{enumerate}[(i)]
    \item If $d_n(\cF) \leq C_0 n^{-\alpha}$, $n \in \N$ for $\alpha > {0}$, then for any $0 < \beta < \min \left\lbrace \alpha,\frac{1}{2}\right\rbrace$ we have $\sigma_n(\cF) \leq C_1(\hue{\lambda})n^{-\alpha + 1/2 + \beta}$ with
    \begin{align*}
        C_1(\hue{\lambda})
        := \max \Biggl\{ C_0 4^{4\alpha+1} (\hue{\lambda} \gamma)^{-4} \left( \frac{2 \beta +1}{2 \beta} \right)^{\alpha},
        \max_{n=1, \ldots, 7} \left\lbrace n^{\alpha-\beta - 1/2}\right\rbrace \Biggr\}.
    \end{align*}
    \item If $d_n(\cF)\leq C_0 e^{-c_0 n^\alpha}$, $n \in \N$, for $\alpha>0$, then $\sigma_n(\cF) \leq C_1(\hue{\lambda}) \sqrt{n} e^{-c_1(\hue{\lambda}) n^\alpha}$ with 
    \begin{equation*}
        C_1(\hue{\lambda})
        := \max \left \lbrace \frac{\sqrt{2 C_0}}{\hue{\lambda} \gamma},
        1 + \epsilon \right \rbrace
        \quad \text{and} \quad
        c_1(\hue{\lambda}) := \min \left \lbrace 2^{-1-2\alpha}c_0,
        \ln(C_1(\hue{\lambda})) \right \rbrace
    \end{equation*}
    for some (arbitrarily small) $\epsilon > 0$.
    \end{enumerate}
\end{corollary}

Note, that POD is not meaningful in Banach spaces.

\printbibliography
\end{document}